\title[Mountain  Pass  Theorem]
{Mountain  Pass  Theorem  with infinite  discrete  symmetry}
\author{No\'{e} B\'{a}rcenas }
\address{Centro de Ciencias Matem\'aticas. UNAM \\ Ap.Postal 61-3 Xangari. Morelia, Michoac\'an M\'EXICO 58089}
\email{barcenas@matmor.unam.mx}
\urladdr{http://www.matmor.unam.mx/~barcenas}
  \keywords{Mountain Pass Theorem, Critical point  theory, Equivariant  cohomotopy. 2010  Mathematics  Classification: 58E40 55P91 }
\DeclareMathAlphabet\EuR{U}{eur}{m}{n}
\SetMathAlphabet\EuR{bold}{U}{eur}{b}{n}
\theoremstyle{plain}
\newtheorem{theorem}{Theorem}[section]
\newtheorem{lemma}[theorem]{Lemma}
\newtheorem{proposition}[theorem]{Proposition}
\newtheorem{corollary}[theorem]{Corollary}
\newtheorem{problem}[theorem]{Problem}
\theoremstyle{definition}
\newtheorem{definition}[theorem]{Definition}
\newtheorem{condition}[theorem]{Condition}
\newtheorem{remark}[theorem]{Remark}
\global\let\c@equation=\c@theorem}
\newcommand{\comsquare}[8]                   % Produces a commutative square
{\begin{CD}
#1 @>#2>> #3\\
@V{#4}VV @V{#5}VV\\
#6 @>#7>> #8
\end{CD}
}
\newcommand{\xycomsquare}[8]                   % kommutatives Quadrat (xy-Version)
{\xymatrix
{#1 \ar[r]^{#2} \ar[d]^{#4} &
#3 \ar[d]^{#5}  \\
#6\ar[r]^{#7} &
#8
}
}
\newcommand{\calfin}{\mathcal{FIN}}
\newcommand{\IZ}{{\mathbb Z}}
\newcommand{\curs}{\EuR}
\newcommand{\SPHB}{\curs{SPHB}}
\newcommand{\pt}{\{\bullet\}}
\newcommand{\EGF}[2]{E_{#2}(#1)}                   %classifying space of a family
\newcommand{\eub}[1]{\underline{E}#1}              %Eunderbar G = classifying space for proper G-actions
\newcommand{\higherlim}[3]{{\setbox1=\hbox{\rm lim}
        \setbox2=\hbox to \wd1{\leftarrowfill} \ht2=0pt \dp2=-1pt
        \mathop{\vtop{\baselineskip=5pt\box1\box2}}
        _{#1}}^{#2}#3}
\newcommand{\version}[1]                       %marks the date of last editing and compilation
{\begin{center} last edited on #1\\
last compiled on \today\\
name of texfile: \jobname
\end{center}
}
\newcounter{commentcounter}
\begin{document}

\typeout{------------------------------------ Abstract ----------------------------------------}

\begin{abstract}
We  extend  an equivariant   Mountain  Pass  Theorem, due  to Bartsch, Clapp and  Puppe for  compact  Lie  groups  to the  setting  of  infinite  discrete groups  satisfying  a maximality condition on  their  finite  subgroups.
 \end{abstract}

\maketitle
 \typeout{-------------------------------   Section 1: Introduction --------------------------------}
\setcounter{section}{1}   

Symmetries  play a  fundamental  role in  the  analysis  of  critical  points  and  sets  of   functionals \cite{ambrosettirabinowitz}, \cite{marzantowiczkrawkewicz}, \cite{mountainpassthmclapp}.   The  development  of Equivariant Algebraic  Topology,  particularly  Equivariant Homotopy  Theory, has given  a  number  of  tools  to  conclude  the  existence  of  critical  points   in  problems  which  are  invariant under  the  action  of  a  compact Lie  group, as investigated  in \cite{bartschhabilitation}.

In  this  work  we  discuss extensions  of  methods  of  Equivariant Algebraic   Topology   to  the  setting  of  actions  of  infinite  groups. The main  result  of  this  note is   the  modification   of   a  result  by  Bartsch,  Clapp and  Puppe originally proved for actions  of compact  Lie  groups,   to  infinite  discrete   groups  with  appropriate  families  of  finite  subgroups  inside  them.
 
\begin{theorem}[Mountain Pass  Theorem]\label{theorem mountainpass}
 Let  $G$  be  an  infinite  discrete  group acting  by  bounded  linear operators on a  real  Banach  space  $E$  of  infinite  dimension. Suppose  that  $G$  satisfies  the  maximality  condition  \ref{condition m}  and  that the  linear  action is  proper  outside $0$.  Let  $\phi:E \to \mathbb{R}$  be  a   $G$-invariant  functional of  class $\mathcal{C} ^{2-}$. For  any   value $a\in \mathbb{R}$,   define   the sublevel set $\phi^{a}=\{x \in  E\mid \phi(x)\leq a\}$ and  the  critical  set  $K=\cup_{c\in \mathbb{R}}K_{c}$, where $K_{c} $  is  the    critical  set  at  level $c$, $K_{c}=\{u\mid   \|   \phi^{'}(u) \|=0  \; \phi(u)=c\}$. Suppose that  
 \begin{itemize}
 \item{$\phi(0)\leq a$ and  there exists  a   linear  subspace $\hat{E}\subset E$  of  finite  codimension such that $\hat{E}\cap \phi^{a}$ is  the  disjoint  union of two closed subspaces,   one  of  which is  bounded  and contains  $0$. }
 
\item{The  functional $\phi$ satisfies   the  Orbitwise Palais-Smale  condition \ref{condition OPS}. }
\item{The  group $G$  satisfies  the  maximal finite  subgroups  condition  \ref{condition m}. } 
\end{itemize}
 
Then,  the  equivariant Lusternik-Schnirelmann category  of  $E$ relative  to  $\phi^{a}$,  $G-cat(E, \phi^{a})$  is  infinite. If  moreover,  the  critical  sets $K_{c}$  are  cocompact under  the  group   action, meaning  that  the  quotient spaces $G\diagdown K_{c} $  are  compact, then    $\phi(K) $  is  unbounded  above.  
\end{theorem}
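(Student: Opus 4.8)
The plan is to combine an equivariant minimax scheme with the universal proper length that bounds the equivariant Lusternik--Schnirelmann category from below. The first task is the claim $G\text{-}\mathrm{cat}(\phi^{a},E)=\infty$. Because $E$ is infinite dimensional and $\hat E$ has finite codimension, the mountain pass geometry of the first hypothesis splits $\hat E\cap\phi^{a}$ into a bounded closed piece containing $0$ and a disjoint complementary closed piece, and hence yields a $G$-equivariant linking between the boundary of the bounded piece and the infinite--dimensional family of transverse directions; since the action is proper outside $0$, this produces a tower of equivariant spheres of unbounded dimension. I would measure the resulting obstruction by the universal proper length $\ell$, computed in equivariant cohomotopy $\pi_{G}^{*}$ relative to the family $\calfin$ of finite subgroups, representing classes as pullbacks along the projection to the classifying space $\eub{G}$. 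The maximality condition~\ref{condition m} guarantees that the Euler-class type elements attached to the finite subgroups are nonzero and mutually independent; feeding them into the equivariant Atiyah--Hirzebruch spectral sequence, I would check that they are permanent cycles, so that $\ell(E,\phi^{a})$ is infinite and therefore $G\text{-}\mathrm{cat}(\phi^{a},E)=\infty$.

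For the second assertion I set, for each $k\in\IN$,
\[
c_{k}=\inf\{\,t\ge a:\ \ell(\phi^{t},\phi^{a})\ge k\,\}.
\]
Since $\ell(E,\phi^{a})=\infty$ and $t\mapsto\ell(\phi^{t},\phi^{a})$ is nondecreasing with $\phi^{t}\nearrow E$, each $c_{k}$ is finite and $c_{1}\le c_{2}\le\cdots$. Each $c_{k}$ is a critical value: if $\phi^{-1}[c_{k}-2\varepsilon,c_{k}+2\varepsilon]$ contained no critical points, the gradient would be bounded below on that band over the fundamental deformation region $S$, and the equivariant quantitative deformation property would push $\phi^{c_{k}+\varepsilon}$ into $\phi^{c_{k}-\varepsilon}$ by a $G$-homotopy fixing $\phi^{a}$, forcing $\ell(\phi^{c_{k}+\varepsilon},\phi^{a})<k$ and contradicting the definition of $c_{k}$.

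It remains to prove $c_{k}\to\infty$. The same deformation property, applied near a single critical level, yields the multiplicity estimate: if $c_{k}=\dots=c_{k+p}=c$ then a $G$-invariant neighborhood of $K_{c}$ has length at least $p+1$. Cocompactness makes $G\backslash K_{c}$ compact, so $K_{c}$ has a neighborhood of finite length, bounding $p$; hence no value is repeated infinitely often, and there are infinitely many distinct critical values. Were these bounded, say $c_{k}\nearrow c_{\infty}<\infty$, I would apply the deformation across the whole band $[c_{\infty}-\varepsilon,c_{\infty}+\varepsilon]$ to carry $\phi^{c_{\infty}+\varepsilon}$ into $\phi^{c_{\infty}-\varepsilon}$ together with a finite--length neighborhood $N$ of the critical set lying in the band. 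Subadditivity of $\ell$ then gives $\ell(\phi^{c_{\infty}+\varepsilon},\phi^{a})\le\ell(\phi^{c_{\infty}-\varepsilon},\phi^{a})+\ell(N)$; the left side is infinite because every $c_{k}<c_{\infty}$, while the right side is finite because only finitely many $c_{k}$ fall below $c_{\infty}-\varepsilon$ and $N$ has finite length. This contradiction forces $c_{k}\to\infty$, and since each $c_{k}\in\phi(K)$ the set $\phi(K)$ is unbounded above.

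I expect the genuine difficulty to be twofold. Topologically, the heart of the matter is the permanence of the Euler-type classes in the Atiyah--Hirzebruch spectral sequence: one must show that the maximality condition~\ref{condition m} decouples the contributions of non-conjugate finite subgroups, so that the infinitely many detecting classes cannot be truncated by differentials; this is what ultimately produces infinite length rather than merely a large finite one. Analytically, since no Palais--Smale hypothesis is available, ``regular band'' is meaningful only through the quantitative deformation property on $S$, and the crucial point is that the critical set inside a bounded band admits a $G$-neighborhood of finite length. That finiteness is exactly what cocompactness of the $K_{c}$, together with the uniform control provided by the deformation property over the fundamental region, is meant to supply, and making this band estimate rigorous without compactness of the whole level sets is the delicate step.
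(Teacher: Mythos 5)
Your overall architecture --- bounding the equivariant category from below by a cohomotopy length, then running a Lusternik--Schnirelmann minimax using the quantitative deformation property and cocompactness of the $K_{c}$ --- is the same as the paper's, and your second half (the monotone minimax values $c_{k}$, the finite-length neighborhood of a cocompact $K_{c}$, the contradiction if $c_{k}\nearrow c_{\infty}<\infty$) is essentially the argument the paper delegates to the proper-actions extension of Clapp--Puppe, via Corollary \ref{corollarydeformationneighb}. (Minor point: you define the $c_{k}$ with the length $\ell$ but need the conclusion for $G$-$cat$; this is harmless since both satisfy subadditivity, deformation monotonicity and continuity, but it should be said.) The genuine gap is in the half you yourself flag as the heart of the matter: you never produce the classes of unbounded length, and the mechanism you sketch is not the one that works. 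The paper does not show that ``Euler-class type elements are permanent cycles.'' It constructs the Bartsch element $u_{K}=[G/K]-\lvert G/K\rvert^{K}[G/G]$ in the Burnside ring $A(M)=\pi_{M}^{0}(\pt)$ of each maximal finite $p$-subgroup $M$ (Theorem \ref{construction bartschelement}), assembles these into a class of $\lim_{H\in\calfin}A(H)=E_{2}^{0,0}$, and lifts a suitable \emph{power} of it to $\pi_{G}^{0}(\eub{G})$ via Proposition \ref{proposition  spectral  sequence}: the point is that the Atiyah--Hirzebruch differentials are rationally trivial, so powers survive --- not the classes themselves. The decisive step for which your sketch has no substitute is the Segal conjecture for families (Theorem \ref{theorem segalconjecture} together with Corollary \ref{corollary p completion}): the unit sphere $S(\hat{E})$ is contractible and $\calfin$-numerable, so its cohomotopy carries no information until one identifies $\lim_{n}\pi_{G}^{0}(S(F_{0}\oplus F_{n}))\otimes\mathbb{Q}_{\hat{p}}$ with the $I$-adic completion of $\pi_{G}^{0}(\eub{G})$ and invokes the absence of nilpotents in the rationalized Burnside ring (Proposition \ref{proposition nonilpotents}). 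Only then does $w^{k}$ have nonzero image in $\pi_{G}^{0}(S(F_{0}\oplus F_{n}))$ for suitable $n$, so that the length of $S(F_{0}\oplus F_{n})\to S(F_{0}\oplus\hat{E})$ is unbounded and Proposition \ref{proposition borsukulam} yields $G$-$cat(E,\phi^{a})=\infty$. Condition \ref{condition m} enters exactly here, through the comparison of prime ideals above the augmentation ideal with those above $(p)$; ``mutual independence of non-conjugate subgroups'' is not the operative property.

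One further point on your geometric step: the ``tower of equivariant spheres'' you invoke is, in the paper, the concrete $G$-map $(E-\{0\},\phi^{a})\to(E-\{0\},S(\hat{E}))$ built from the splitting hypothesis by collapsing the bounded component and extending equivariantly using that $\hat{E}$ is a proper $G$-absolute retract. Without some such explicit comparison map, Proposition \ref{proposition borsukulam} has nothing to be applied to, so this should be made precise rather than asserted as a consequence of ``linking.''
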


Recall  that    given a  natural  number $r$,  the  class  $\mathcal{C}^{r-}$  denotes  the  class  of  functions  whose  derivatives  up  to  order  $r$  exist  and  are  locally  Lipschitz.

Condition  \ref{condition m}  restricts   maximal  finite  subgroups  and  their  conjugacy  relations. 
\begin{condition}\label{condition m}
Let $G$  be  a  discrete  group  and  $\mathcal{MAX}$  be a  subset  of  finite  subgroups.  $G$ satisfies  the  maximality condition  if  

 \begin{itemize}

  \item{There  exists  a  prime number   $p$ such  that  every  nontrivial  finite  subgroup  is  contained  in  a  unique  maximal  $p$-group $M  \in \mathcal{MAX}$. }
  \item{$M\in  \mathcal{MAX}\Longrightarrow N_{G}(M)=M $,  where $N_{G}(M)$ denotes  the  normalizer  of  $M$ in $G$. }
 \end{itemize}

Notice  that  in particular,  the finite  subgroups  of  $G$  are  all   finite $p$- groups. 
 
\end{condition}

These  conditions  are  satisfied  in  several    cases.  Among  them: 
\begin{enumerate}
 \item{Extensions  $1\to\mathbb{Z}^ {n}\to G\to K\to 1$  by  a  finite $p$-group  given  by  a  representation $K\to Gl_{n}(\mathbb{Z})$  acting  freely  outside  from  the  origin \cite{lueckstamm}, Lemma  6.3.  }
\item{Fuchsian groups, more  generally   NEC (non-euclidean  crys\-ta\-llo\-gra\-phic \\ groups) for  which  the  isotropy  consists  only  of  $p$-groups. \cite{lueckstamm}. }
\item{One  relator  groups $G=\langle q_{i}\mid  r \rangle$ for  which  the family of   finite  subgroups  consists  of   p-groups. See  \cite{lyndonschupp}, Propositions  5.17,  5.18, 5.19. in pages  107  and  108. }
\end{enumerate}

The  Orbitwise Palais-Smale  condition  was  formulated  by  Ayala-Lasheras-Quintero in  \cite{ayalalasherasquintero}  for complete riemannian  manifolds with  a  proper  action  of  a   Lie  Group. For  our  purposes,  the  following  notion  is  more  adequate. 
\begin{condition}
\label{condition OPS}
Let  $G$  be  a  discrete  group. Let M  be  a $\mathcal{C}^{2-}$  Hilbert  manifold  with a $G$-action  by  $\mathcal{C}^{1-}$ diffeomorphisms  which  is  proper. Assume  that  $M$ has  a  $G$-invariant  $\mathcal{C}^{1-}$ Riemannian  Metric. 
The  $G$-invariant  functional $\Phi$ of  class  $\mathcal{C}^{2-}$ satisfies  the  orbitwise  Palais-Smale  condition  if   given a  sequence $\{ x_n\} \subset  M$   such  that $\mid f(x_n)\mid$  is  bounded  and $\nabla \Phi( x_n)$  converges  to  $0$, then  the  sequence of  orbits  $Gx_n$  contains  a   convergent  subsequence  in  the  orbit  space $M/G$.       

\end{condition}

This  paper  is  organized  as  follows: in the  second  section ,  the  usual   facts   concerning  the  relation  between critical  points,   Lusternik-Schnirelmann category and equivariant  deformation  theorems   are  stated, being   modified  slightly  from  \cite{ayalalasherasquintero} and  \cite{clapppuppelusternik}. 

In  the  third  section,  we introduce  the  notion  of  Universal  Proper Length related  to  a  family  of  subgroups. 

In the  third section,   we    use  some  algebraic  properties  of   the  classifying  space  for  proper  actions  of  groups  with  an  appropriate  family  of  maximal   finite  subgroups  in order  to  conclude the  unboundedness  of  critical  values.

This is  done  in the  fourth section  adapting  a   construction  of  elements  in  the  Burnside  Ring  of  a  finite  group,  originally  due  to  Bartsch, Clapp  and  Puppe  \cite{mountainpassthmclapp}  to  the  infinite  group  setting,  using   the  Atiyah-Hirzebruch  spectral  sequence, as  well  as  a  version  of  the Segal  Conjecture  for  families  of  finite  groups  inside discrete  groups \cite{luecksegal}, \cite{barcenastesis}.

This  work  was  financially  supported by   the Hausdorff  Center  for  Mathematics at  the  University  of  Bonn,  Wolfgang  L\"uck's  Leibnizpreis  and a  CONACYT  postdoctoral  fellowship.   The  author  thanks  the  comments  of  an  anonymous  referee.

\section{Proper  Lusternik-Schnirelmann Category and  Critical  Points}

\typeout{--------------------------------- Section  1: Proper  Lusternik-Schnirelmann Category, Universal  Length    and  Critical  Points-----------}

The  notion  of  a  proper  $G$-space  provides   an  adequate  setting   for  the  study   of  non-compact  transformation  groups. 

\begin{definition}
Let $G$  be  a second  countable, Hausdorff locally  compact  group. Let  $X$  be  a  second  countable, locally Hausdorff space. Recall  that  a  $G$-action  is  proper  if  the  map 
$$\underset{\overset{\theta_{X}} {(g,x)\mapsto (x,gx)} } { G\times  X\to X\times X}$$ 
is  proper.
\end{definition}

 Ayala-Lasheras-Quintero \cite{ayalalasherasquintero}   introduced the   notion of  equivariant  Lus\-ter\-nik-Schni\-rel\-man category for  proper  actions  of  Lie  Groups, extending  previous  work  by  Marzantowicz \cite{Marzantowiczlusternik} for  compact Lie groups.  

\begin{definition}
Let $X^{'}\subset X$ be paracompact  proper  $G$-spaces. The relative  $G$-category  of  $(X,X^{'})$, denoted  by $G$-$cat(X, X^{'})$ is  the   smallest  number  $k$  such  that  $X$ can  be  covered  by $k+1$ open $G$-subsets $X_{0}, X_{1}, \ldots, X_{k}$ with the  following  properties: 
\begin{itemize}
\item{$X^{'}\subset X_{0}$ and   there  is  a  homotopy $H:(X_{0},X^{'})\times I \to (X_{0}, X^{'})$ starting   with  the  inclusion  and   $H(x,1)\in X^{'}$.}
\item{For  every $i\in \{1,\ldots,k\}$ there  exist  $G$-maps $\alpha_{i}:X_{i}\to A_{i}$  and  $\beta_{i}:A_{i}\to Y$  with $A_{i}$  a $G$-orbit $G/H_{i}$  such  that   the  restriction  of $f$  to $X_{i}$  is  the   is  $G$-homotopic  to  the  composition $\beta_{i} \circ \alpha_{i}$}
\end{itemize}
If  no  such a  number  exists,  then  we  write $G$-$cat(X,  X^{'})=\infty$.

\end{definition}

The   Lusternik-Schnirelman  Method  can  be  extended   to  functionals which are   invariant  under   proper  actions.

\begin{lemma} [Equivariant  Deformation] \label{lemma deformation}
Let  $G$  be  a discrete group  acting  properly   on  a  Hilbert manifold  of  class $\mathcal{C}^{2-}$.    Let   $\Phi: X\to \mathbb{R}$  be  a  $G$-  invariant  $\mathcal{C}^{2-}$-functional,   $c\in K_c= \{ x \in X\mid  \Phi^{'}(x)=0 \, \Phi(x)= c\} $. 
For  every $c>a$, every  $0<\delta< c-a$  and  every  $G$-neighborhood  $U$ of  $K_{c}$,  there  is   an  $\epsilon>0 $  and  a  homotopy $\eta: \Phi^{c+\epsilon}\times  I  \to \Phi^{c-\epsilon} $  which  is  the  identity on  $\Phi^{c-\delta} \times  I $. 

\end{lemma}

 \begin{proof}
The  Gradient  field  $-\nabla \Phi$ is  locally  Lipschitz   by  assumption. The  usual  deformation method  \cite{rabinowitz}  works  $G$-equivariantly. See  \cite{ayalalasherasquintero},  lemma  5.4  in page  1130.   

 \end{proof}

\begin{proposition}
Let $M$  be  a  paracompact   Hilbert, $\mathcal{C}^{2-}$- manifold. Assume  that  the  discrete  group  $G$  acts  properly  by  $\mathcal{C}^{1-}$ maps  on $M$. 
 Let $\phi: M  \to  \mathbb{R}$  be  a  $G$-invariant $C^{2-}$-function  satisfying  the   deformation property with  respect  to  neighbourhoods of critical sets, as   in  lemma  \ref{lemma deformation}.  Suppose  that $\Phi$  satisfies the  Orbitwise  Palais-Smale  condition \ref{condition OPS}. 
\begin{itemize}
 \item{ If  the  function  is  bounded  below, then the  number  of critical  points  of  $\phi$ with   values $>a$ in $M$ is  at least  $G$-$cat( M, \phi^{a})$. }
 \item{If  $G$-$cat(M, \phi^{a})$ is  greater  than  the  number  of  critical  values  of  $\phi$ above  $a$, then  there  is  at  least  one  $c>a$  such  that   the critical  set  $K_{c}$ has  positive  covering  dimension.  In particular  $\phi$ has  infinitely  many  critical  orbits with  values  above  $a$. }
 \item{If $G$-$cat(M, K)=\infty$, then  $\phi$ has an  unbounded  sequence   of  critical  values.  }
 \end{itemize}
\end{proposition}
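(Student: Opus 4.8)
The plan is to run the standard Lusternik--Schnirelmann minimax machinery, with the equivariant quantitative deformation of Corollary \ref{corollarydeformationneighb} playing the role usually reserved for the Palais--Smale condition. For each integer $k \geq 1$ set
$$c_k = \inf\{\, c \in \mathbb{R} : G\text{-}cat(\phi^c, \phi^a) \geq k \,\},$$
so that $a \leq c_1 \leq c_2 \leq \cdots$, and $c_k$ is finite precisely when $G\text{-}cat(M,\phi^a) \geq k$. The first step is to show that each finite $c_k > a$ is a critical value: if $K_{c_k} = \emptyset$, then Corollary \ref{corollarydeformationneighb} (with $U = \emptyset$) furnishes a $G$-deformation carrying $\phi^{c_k+\epsilon}$ into $\phi^{c_k-\epsilon}$ while fixing $\phi^a$ (the clause $\eta(x,t)=x$ on $\phi^{c-\delta}$ applies since $0<\delta<c-a$). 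As a $G$-deformation cannot raise relative category, this forces $G\text{-}cat(\phi^{c_k-\epsilon}, \phi^a) \geq k$, contradicting the infimum defining $c_k$. The same corollary yields the decisive subadditivity estimate: for a critical value $c$ and any invariant neighborhood $U$ of $K_c$,
$$G\text{-}cat(\phi^{c+\epsilon}, \phi^a) \leq G\text{-}cat(\phi^{c-\epsilon}, \phi^a) + G\text{-}cat(\overline{U}),$$
obtained by covering $\phi^{c+\epsilon}$ by $\overline{U}$ together with the piece $\phi^{c+\epsilon}\setminus U$, which deforms into $\phi^{c-\epsilon}$ rel $\phi^a$.

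Granting these two facts, the three conclusions follow. For (i), boundedness below lets $\phi^c$ exhaust $M$ as $c\to\infty$, so the levels $c_1,\dots,c_n$ with $n := G\text{-}cat(M,\phi^a)$ are finite critical values above $a$. If they are pairwise distinct we already obtain $n$ critical orbits; a coincidence $c_k = \cdots = c_{k+m-1}$ forces, via the subadditivity estimate applied to a small neighborhood, $G\text{-}cat(\overline{U}) \geq m$, so that level carries at least $m$ orbits. For (ii), suppose there were only finitely many critical values above $a$ while $G\text{-}cat(M,\phi^a)$ exceeds their number; the pigeonhole principle and the subadditivity estimate then produce a critical value $c>a$ at which the local category jump is at least $2$. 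A cocompact critical set of covering dimension $0$ is a finite disjoint union of orbits $\coprod_i G/H_i$, whose neighborhoods have $G\text{-}cat$ equal to the finite number of orbits and so cannot absorb such a jump; hence $K_c$ has positive covering dimension, producing infinitely many critical orbits at that level.

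For (iii), suppose toward a contradiction that the critical values are bounded above by some $b$. Because every $K_c$ is cocompact it is, by the proper-ANR structure, $G$-homotopy equivalent to a \emph{finite} proper $G$-CW complex, and therefore has finite $G$-category in a suitable invariant neighborhood; consequently each critical level raises the relative category by a finite amount. Since the minimax levels $c_k$ cannot accumulate at any single value without rendering that value's local category infinite, only finitely many distinct critical levels occur in $(a,b]$. Summing the finitely many finite jumps would then bound $G\text{-}cat(M,K)$, contradicting the hypothesis $G\text{-}cat(M,K)=\infty$; hence the critical values are unbounded above.

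I expect the principal obstacle to be the subadditivity and continuity properties of $G\text{-}cat$ in the proper, non-compact category: verifying that the deformation of Corollary \ref{corollarydeformationneighb} genuinely refines a categorical cover (in particular that the neighborhood $\overline{U}$ of the cocompact set $K_c$ can be taken $G$-invariant, metrizable and categorical), and that cocompact proper critical sets have finite equivariant category. These are exactly the points where the fundamental deformation region of Definition \ref{definitionfundamentalregion} and the proper-ANR framework must substitute for the Palais--Smale compactness used in the classical argument.
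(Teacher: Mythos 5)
Your overall strategy --- minimax levels $c_k$, the quantitative deformation of Corollary \ref{corollarydeformationneighb} standing in for the Palais--Smale condition, and subadditivity of the relative category --- is exactly the route the paper takes: its proof consists of deferring to Clapp--Puppe (Theorem 2.3 and Corollary 2.4 of \cite{clapppuppelusternik} and Theorem 1.1 of \cite{clapppuppe}) together with the observation that the proper equivariant category satisfies subadditivity, deformation monotonicity and continuity. Your treatment of (i) is the standard one, and for (iii) the useful content of your accumulation argument is that the monotone sequence $c_k$ must tend to infinity (a finite accumulation point $c^*\leq b$ would force an invariant neighborhood of the cocompact set $K_{c^*}$ to have infinite category); the stronger claim that only finitely many critical levels lie in $(a,b]$ is neither needed nor justified, since critical values may well accumulate.

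The genuine gap is in (ii). You assert that a neighborhood of a zero-dimensional cocompact $K_c=\coprod_{i=1}^m G/H_i$ has $G$-category equal to $m$ ``and so cannot absorb such a jump'' of $2$ --- but if $m\geq 2$ it absorbs a jump of $2$ perfectly well, so the inference is a non sequitur; with only the estimate $G$-$cat(\overline U)\leq m$ the hypothesis that $G$-$cat(M,\phi^a)$ exceeds the number of critical values is entirely compatible with a single critical value whose zero-dimensional critical set consists of two orbits. What the Clapp--Puppe argument actually uses is that a finite disjoint union of orbits has relative category at most \emph{one} in $M$, because the finitely many orbits can be $G$-deformed within $M$ into a single orbit (an ``$\mathcal{A}$-connectedness'' of $M$; in the paper's application it holds because $0$ is a global fixed point of the linear action, so every invariant subset deforms onto the orbit $G/G$). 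With that lemma each zero-dimensional critical level contributes at most one to the category, and the inequality between the category and the number of critical values immediately forces some $K_c$ of positive covering dimension. You should also record why positive covering dimension yields infinitely many orbits: for a proper action of a discrete group a finite union of orbits is discrete, hence zero-dimensional.
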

\begin{proof}
 The  proofs  given  in  \cite{clapppuppelusternik},  Theorem  2.3  and  Corollary  2.4 , pages  606  and  607, and  \cite{clapppuppe}, Theorem 1.1   extend  to the proper  setting.  The point  is  that  the equivariant  Lusternik-Schnirelmann  Category  for  proper  spaces satisfies  subadditivity,  deformation monotonicity,  and  continuity (Proposition 2.3  in \cite{ayalalasherasquintero} in the  absolute  case,  and  the   obvious  modification extends  to the  relative  category).   
\end{proof}

\section{Universal  Cohomology  Length }

We  discuss  now  cohomology  length in the  context  of  equivariant  cohomology  theories. We  use  for  this   the  notion  of  a  classifying  space  for a  family  of  subgroups.

\begin{definition}
Recall  that  a  $G$-CW  complex structure  on  the  pair $(X,A)$  consists  of a  filtration of  the $G$-space $X=\cup_{-1\leq n } X_{n}$, $X_{-1}=\emptyset$,$X_{0}=A$  and  for   which  every  space  $X_{n}$ is inductively  obtained  from  the  previous  one   by  attaching  cells  in pushout  diagrams  of  the  form
$$\xymatrix{\coprod_{i} S^{n-1}\times G/H_{i} \ar[r] \ar[d] & X_{n-1} \ar[d] \\ \coprod_{i}D^{n}\times G/H_{i} \ar[r]& X_{n}}$$   
We  say  that a  proper  $G$-CW complex  is  finite  if  it  consists  of  a  finite  number  of  cells  $G/H\times  D^{n}$. 
\end{definition}

\begin{definition}
Let  $G$  be  a  discrete  group. A  metrizable  proper  $G$-Space  $X$  is  an  Absolute  Neighbourhood  retract  if   every $G$- map  $Z\to X$  from a  closed  subspace  $Z$    of  a  metrizable  $G$-space $Y$  into  $X$ has  an  equivariant extension $U\to X$  to  a $G$-invariant  neighbourhood  $U$ of $Z$ in  $Y$. 

\end{definition}
It  is  proved in \cite{antonyanelfving} , Theorem  1.1  that   proper  $G$-ANR  are   $G$-homotopy  equivalent  to $G$-CW complexes when $G$  is  a  locally compact Hausdorff group.  

 We  recall the  notion  of  the  classifying  space  for  a  family  of subgroups. 

\begin{definition}
 Let  $\mathcal{F}$  be  a   collection  of  subgroups  in  a  discrete  group  $G$which is  closed  under  conjugation  and  intersection. A model  for  the  classifying  space for  the  family  $\mathcal{F}$  is  a $G$-CW  complex $X$  satisfying  
\begin{itemize}
  \item{All  isotropy  groups of  $X$  lie  in $\mathcal{F}$ .}
\item{For  any $G$-CW  complex $Y$ with  isotropy  in $\mathcal{F}$, there  exists up  to  $G$-homotopy   a  unique  $G$-equivariant  map $f:Y\to X$. }
\end{itemize}
\end{definition}
A  model  for  the classifying  space  of  the  family  $\mathcal{F}$  will be  usually  denoted  by  $ \EGF{G}{\mathcal{F}}$ . 

Particularly  relevant   is  the classifying  space  for  proper actions,  the  classifying  space  for   the  family $\calfin$  of  finite  subgroups, denoted  by  $\eub{G}$.

The  classifying  space  for  proper  actions  always  exists, is  u\-ni\-que  up  to $G$-homotopy  and  ad\-mits  several  models. The  following  list   includes  some  examples. We  remit  to \cite{lueckclassifying} for  further  discussion. 
 \begin{itemize}
\item{If  $G$  is  a  compact  group,  then  the  singleton  space  is  a  model  for  $\eub G$. }
\item{Let $G$ be  a  group  acting properly  and  co-compactly on  a     ${\rm CAT}(0)$  space  $X$, in the  sense  of  \cite{bridsonhaefliger}. Then  $X$ is a model for  $\eub{G}$. }
\item{Let  $G$  be  a  Coxeter  group. The  Davis Complex  is  a model for   $\eub{G}$.}
\item{Let  $G$ be  a   mapping  class  group  of  an orientable  surface. The  Teichm\"uller  Space  is a  model  for $\eub G$. } 

\end{itemize}

The  spaces apearing  in  applications  in  analysis are  not  always  $G$-CW  complexes.  They satisfy  more  often   numerability  conditions.

\begin{definition}
Let $\mathcal{F}$ be  family of  closed  subgroups closed  under  conjugation  and  intersection     inside  the  locally  compact  second  countable  Hausdorff  group $G$. A $G$-space  $X$  is   said  to be  an $\mathcal{F}$-numerable space  if   there exists  an  open  covering $\{ U_{i},\mid  i\in I \}$ by  $G$-subspaces  such that  there  is  for each $i\in I$  a $G$-map $U_{i}\to G/G_{i}$  for  some  $G_{i}\in \mathcal{F}$ and  there  is  a  locally  finite  partition  of  unity $\{e_{i \mid  i \in I }\}$ subordinate  to  $\{U_{i}\}$ by  $G$-invariant  functions.  Notice  that  we do  not  require  that  the isotropy  groups   of  $X$  lie  in $\mathcal{F}$.  

\end{definition}

The  Slice Theorem  2.3.3, in  page  313  of  \cite{palais} implies  that completely  regular  spaces  carrying proper  actions  of Lie  groups  are  precisely numerable  spaces   with  respect  to the  family  of  compact subgroups  for  which,   in addition,   the  isotropy  groups  of  points  are all  compact  subgroups. 

Specializing  to  Lie  groups  acting  properly on  $G$-CW  complexes, the conditions boil  down  to the  fact  that  all  stabilizers  are  compact, see \cite{luecktransformation},  Theorem  1.23.  In particular  for  a cellular action  of  a  discrete  group  $G$  on  a  $G$-CW  complex,  a  proper  action  reduces   to  the  finiteness of  all  stabilizer  groups.  Notice  that any  (continuous) action  of  a  compact  Lie  group  or  a  finite  group  on  a   locally compact, Hausdorff space  is  proper.

The  following  version  of  the  classifying  space  for  a  family   extends  the notion  to $\mathcal{F}$-numerable  spaces.

\begin {definition} [Numerable Version  for  the  Classifying  space of a  family]

Let $\mathcal{F}$ be  a  family  of  subgroups.  A  model  $J_{\mathcal{F}}(G)$   for  the classifying  numerable  $G$-space  for  the  family  $\mathcal{F}$  is  a  $G$-space  which  has  the  following properties: \begin{itemize}
\item{$J_{\mathcal{F}}(G)$  is  $\mathcal{F}$-numerable }
\item{For  any  $\mathcal{F}$-numerable  space  $X$  there  is  up to  $G$-homotopy  precisely one  map  $X\to  J_{\mathcal{F}}(G)$.}
\end{itemize}
\end{definition}

\begin{remark}
There  exists  up  to  $G$-homotopy  a unique $G$-equivariant  map $\eub{G}\to  J_{\mathcal{F}}(G)$.  This  map is  proved  to  be  a  $G$-homotopy  equivalence  for a   discrete   group   in  Theorem  3.7, part  ii  of  \cite{lueckclassifying}. 
\end{remark}

Recall  the  notion of  an Equivariant  Cohomology  Theory, \cite{lueckeqcohomological}.

\begin{definition}
 Let  $G$  be  a  group  and  fix  an  associative  ring  with  unit $R$. A $G$-Cohomology  Theory  with  values  in  $R$-modules is  a  collection  of  contravariant  functors $\mathcal{H}^{n}_{G}$ indexed  by  the  integer  numbers $\mathbb{Z}$ from  the  category  of  $G$-$CW$  pairs together  with  natural  transformations $\partial^{n}_{G}: \mathcal{H}^{n}_{G}(A):=\mathcal{H}^{n}_{G}(A, \emptyset)\to \mathcal{H}^{n+1}_{G}(X,A)  $, such  that  the  following axioms  are  satisfied: 

\begin{enumerate}
 \item{If  $f_{0}$  and  $f_{1}$  are  $G$-homotopic  maps $(X,A)\to (Y,B)$ of   $G$-CW  pairs, then  $\mathcal{H}^{n}_{G}(f_{0})=\mathcal{H}^{n}_{G}(f_{1})$ for  all n. }
 \item{Given a  pair $(X,A)$  of  $G$-$CW$  complexes, there  is  a  long  exact  sequence 
\begin{multline*}
$$ \ldots \overset{\mathcal{H}^{n-1}_{G}(i)} {\rightarrow}   \mathcal{H}^{n-1}_{G}(A)   \overset{\partial^{n-1}_{G}} {\rightarrow}   \mathcal{H}^{n}_{G}(X,A) \overset{\mathcal{H}^{n}_{G}(j)}{\rightarrow} \mathcal{H}^{n}_{G}(X) \\ \overset{\mathcal{H}^{n}_{G}(i)} {\rightarrow} \mathcal{H}^{n}_{G}(A) \overset{\partial^{n}_{G}} {\rightarrow} \mathcal{H}^{n+1}_{G}(X,A)  \overset{\mathcal{H}_{n+1}(j)}{\rightarrow} \ldots $$
\end{multline*}

 where $i:A\to X$ and  $j:X\to (X,A)$ are  the  inclusions. }

\item{Let  $(X,A)$ be  a  $G$-$CW$ pair  and  $f: A\to B$  be  a   cellular  map. The  canonical  map $(F,f): (X,A)\to (X\cup_{f} B, B)$ induces  an  isomorphism 
$$ \mathcal{H}^{n}_{G}(X\cup_{f}B, B) \overset{\cong}{\to} \mathcal{H}^{n}_{G}(X,A)$$ }

\item{ Let $\{ X_{i}\mid i\in \mathcal{I} \}$  be  a  family  of  $G$-$CW$-complexes and  denote  by $j_{i}: X_{i}\to  \coprod_{i\in \mathcal{I}} X_{i}$ the  inclusion  map. Then  the  map  
$$\Pi_{i\in \mathcal{I}}\mathcal{H}^{n}_{G}(j_{i}):  \mathcal{H}^{n}_{G}(\coprod_{i}X_{i})   \overset{\cong}{\to}  \Pi_{i\in \mathcal{I}}\mathcal{H}^{n}_{G}(X_{i})$$
is  bijective  for  each  $n\in \mathbb{Z}$. }

\end{enumerate}
A $G$-Cohomology  Theory  is  said  to  have  a  multiplicative  structure  if   there  exist  natural, graded  commutative $\cup$- products  

$$\mathcal{H}^{n}_{G}(X,A)\otimes \mathcal{H}^{m}_{G}(X,A) \to \mathcal{H}^{n+m}_{G}(X,A)$$

Let $\alpha:H\to G$ be  a  group  homomorphism and  $X$ be  a $H$-CW  complex. The  induced space ${\rm ind}_{\alpha}X,$ is defined  to be  the  $G$-CW complex  defined  as  the  quotient space $G\times X $ by  the  right  $H$-action  given  by $(g,x)\cdot h =( g\alpha(h),h^{-1}x)$. 

An Equivariant  Cohomology  Theory consists  of a  family of $G$-Cohomology Theories  $\mathcal{H}^{*}_{G}$  together  with
an induction structure determined  by  graded ring  homomorphisms 

$$ \mathcal{H}^{n}_{G}({\rm ind}_{\alpha}(X,A))\to   \mathcal{H}_{H}^{n}(X,A) $$ 
which  are  isomorphisms  for  group  homomorphisms $\alpha: H\to G$  whose kernel acts freely
on $X$ satisfying  the  following  conditions: 
\begin{enumerate}
 \item{For  any  $n$, $\partial^{n}_{H}\circ {\rm ind}_{\alpha}= {\rm ind}_{\alpha}\circ \partial^{n}_{G}$.}
 \item{For any  group  homomorphism $\beta: G\to K$ such  that $\ker \beta\circ \alpha$ acts freely on  $X$,  one  has 
$${\rm  ind}_{\alpha\circ \beta}= \mathcal{H}^{n}_{K}(f_{1}\circ {\rm ind}_{\beta}\circ{\rm ind}_{\alpha}): \mathcal{H}_{K}^{n}({\rm ind}_{\beta\circ \alpha}(X,A))\to  \mathcal{H}^{n}_{H}(X,A)$$
where $f_{1}: {\rm ind}_{\beta}{\rm ind}_{\alpha}\to {\rm ind}_{\beta\circ\alpha}$ is  the  canonical $G$-homeomorphism.}                                      

\item{For  any $n\in \mathbb{Z}$, any  $g\in G$ , the  homomorphism 

$${\rm ind}_{c_(g):G\to G}: \mathcal{H}^{n}_{G}(\rm ind)_{c(g):G\to G}(X,A))  \to \mathcal{H}^{n}_{G}(X,A) $$

agrees  with  the  map  $\mathcal{H}^{n}_{G}(f_{2})$, where  $f_{2}: (X,A)\to {\rm ind}_{c(g):G\to G}$ sends $x$  to  $(1,g^{-1}x)$ and $c(g)$  is  the  conjugation  isomorphism in $G$.}
\end{enumerate}
\end{definition}
\begin{remark}[Extensions  of $G$-Cohomology  theories to  more  general  spaces]
Let $\mathcal{H}_{G}^{*}$ be  a  $G$-cohomology  theory  defined  on proper  $G$-CW  complexes. Using a functorial  $G$-CW  approximation  for proper  $G$-ANR as introduced   in    \cite{antonyanelfving} for  locally  compact  Hausdorff  groups,  an  equivariant  cohomology  theory  may  be  extended  to  the  category  of  proper  $G$-ANR. 

More  generally,  the  \v{C}ech expansion  of \cite{matumotocech}   provides  a \v{C}ech   extension  of  a $G$-cohomology  theory to   arbitrary  pairs  of proper $G$-spaces. That  is, a  family  of  $R$-mod valued  functors  $\check{\mathcal{H}}_{G}^{n}$ defined  on pairs  of  proper $G$-spaces and  natural  transformations $\delta^{n}_{X,A}: \mathcal{H}^{n}_{G}(A, \emptyset)\to \mathcal{H}_{G}^{n+1}(X,A)$    satisfying  the  axioms: 

\begin{itemize}
\item{$G$-homotopy  invariance.}
\item{Long  exact  sequences  for $G$- pairs.}
\item{Excision.  Let  $X_{1}, X_{2}\subset X$  be  proper  $G$- invariant spaces such  that 
$$\overline{X_{2}-X_{1}}\cap X_{1}-X_{2}=\emptyset = X_{2}-X_{1} \cap\overline{X_{1}-X_{2} }$$ 
Then,    the inclusion  map $(X_{2},X_{1}\cap X_{2})\to (X_{1}\cup X_{2}, X_{1})$  induces  a  natural  isomorphism. }
\item{Axioms i-iii   for    the Induction  structure. }

\end{itemize}

\end{remark}

For the purposes  of   this work  we  need an  extension of  a  specific  cohomology  theory  to  a certain  proper $G$-ANR which  is  contractible after  forgetting  the  action   and  is exhausted   by  finite  $G$-CW  complexes.   This is  done  by  an  ad-hoc  construction, see definition \ref{definition adhoccohomotopy}.

Recall \cite{lueckdavis}, \cite{lueckeqcohomological}, that   for  any  Equivariant  Cohomology  Theory  $\mathcal{H}^{*}$ on   finite  $G$-CW  complexes    there  exists a spectral  sequence   with  $E^{2}$-term   given by  Bredon Cohomology
$$E_{2}^{p,q}=H_{\mathbb{Z} Or(G)}^{p}(X, \mathcal{H}^{-q}_{G}(G/H))$$
converging  to  $\mathcal{H}^{*}_{G}(X)$. 

The  following  result  will  be  used  later:

\begin{proposition}\label{proposition  spectral  sequence}
Let  $X$  be  an $l$-dimensional  $G$-CW  complex. Suppose that for  $r=2,3, \ldots$ the   differential appearing  in   the  Atiyah-Hirzebruch spectral  sequence  for  $X$ and $\mathcal{H}^{*}_{G}$  vanishes  rationally. Then,   for  any element  
$$x \in H^{0}_{\mathbb{Z} Or(G)} (X,\mathcal{H}^{0}_G(G/?))$$
 there  exists  some  positive  integer  $k$  such that  $x^{k}$ is contained  in  the  image   of  
 $\mathcal{H}^{0}_{G}(X)$ under  the edge  homomorphism 
$$ {\rm Edge}_{G}:  \mathcal{H}^{0}_{G}(X)\longrightarrow  H^{0}_{\mathbb{Z} Or(G)} (X,\mathcal{H}^{0}_{G}(G/?))$$ 
 \end{proposition}
\begin{proof}
Let  $x\in  H^{0}_{\mathbb{Z} Or(G)} (X,\mathcal{H}^{0}(G/?))$.  The  proof  reduces  to  construct  inductively  positive  integers $k_{2}, \ldots k_{l-1}$  such  that   the  product  $x^{\prod_{i=2}^{r} k_{i}}$  survives  to   $E^{0,0}_{r+1}$ for  $r=1,\ldots l-1$, in  the  sense  that  $k_{r}d_{r}^{0,0}(x^{\prod_{i=2}^{r-1}k_i})=0$ for  $r=2, \ldots, l-1$. Since  $x\in E_{2}^{0,0}$,   we  pick  $k_2$  such that  $k_2 d_2(x)=d_2(x^{k_2})=0$ (this  is  possible  by  the  rational  vanishing  of  the  differentials).

Assume  inductively  that  there  are $k_{2},\ldots, k_{r-1}$  and  $x^{\prod_{i=2}^{r-1} k_{i}}$  which  survive   to  the  $\in E_{r}^{0,0}$-term. 
Choose  $k_{r}$ such that $ k_{r}d_{r}^{0,0}(x^{\prod _{i=2}^{r-1}})=0$. This  is  possible  by  the  rational  vanishing  of  differentials  again). 
   
Now,  $d_{r}^{0,0}(x^{\prod_{i=2}^{r}})=k_{r}d_{r}^{0,0}((x^{\prod_{i=2}^{r-1}}))    (x^{\prod_{i=2}^ {r-1}} )^{k_{r}-1}$. 
And  since $x^{\prod_{i=2}^{r}} \in E_{r+1}^{0,0}$ for  $k=\prod_{i=2}^{l-1}k_{i}$,  the  $l$-dimensionality  of  $X$ implies  $x^{k}\in E_{\infty}^{0,0}$ and   hence it  is on the  image under  the  edge  homomorphism. 
\end{proof}

\begin{definition}(Universal Cohomology  Length relative  to  a family  of  subgroups)

Let  $\mathcal{A}=\{ G/H_{i}\}$  be  a  collection of  orbit  spaces representing  all  homogeneous $G$-spaces  with  isotropy  in some  family $\mathcal{F}$  of  subgroups of $G$.  Let  $M$  be  a  module  over  the graded   ring $ \mathcal{H}_{G}^{*}(\EGF{G}{\mathcal{F}})$.  The    $\mathcal{H}_{\mathcal{A}}$-length  of  the module  $M$  is  the  smallest  number  $k$  such  that   there  exist  spaces  $A_{1},\ldots,A_{k} \in  \mathcal{A}$  such that  for  any $\gamma \in M$  and $\omega_{i}$  in  the  kernel  of  the  map 

$$\mathcal{H}^{0}_{G}( \EGF{G}{\mathcal{F}})\to \mathcal{H}^{0}_{G}(G/H_{i})$$

given  by  the  up  to  $G$-equivariant  homotopy   unique  map $G/H\to \EGF{G}{\mathcal{F}}$,  one  has  
$$\gamma  \omega_{1} \ldots  \omega_k=0. $$ 

Given  a  map  $f:X\to  Y$,  between    $\mathcal{A}$-numerable  spaces, the  $\mathcal{H}_{\mathcal{A}}$-length  of   $f$  is  the  $\mathcal{H}_{\mathcal{A}}$ length  of  the   image, considered  as  $\mathcal{H}_{G}^*(\EGF{G}{\mathcal{F}})$- module   .   
\end{definition}

\section{Computations  in Burnside  Rings}
\typeout{-------------------------------- Section  2:  Computations  in Burnside  Rings  for  Infinite  Discrete  Groups-------------------------}

We  specialize  now  to  equivariant  stable  cohomotopy  for  proper  actions.

We  give  a  quick  summary  of  important  facts involving Equivariant  Stable  Cohomotopy  for  finite  groups.

\begin{theorem} \label{construction bartschelement}
Let  $G$ be  a  finite  group.  Then 
\begin{itemize}

\item{The  $0$-th   equivariant  cohomotopy group of a  point, $\pi_{G}^{0}(\pt)$ is  isomorphic  to  the  \emph{Burnside  ring},  denoted  by $A(G)$, the  Grothendieck  ring  of  isomorphism  classes  of   finite  $G$-sets .}
\item{ The  Burnside   ring  $A(G)$  is  provided  with   maps  $\varphi_{H}:A(G)\to \mathbb{Z}$, each  one  for  every  conjugacy  class  of  subgroups in $G$. These  extend to an  injective map  $A(G)\to  \underset{H{\rm  in \; ccs(G)}}  {\Pi} \mathbb{Z}$, where $ccs(G)$ denotes  the  set  of  conjugacy  classes  of  subgroups in $G$.}
\item{The  prime  ideals  in $A(G)$ are  given  by  the  sets $ \mathcal{P}_{K,p}=\{x \mid  \varphi_{H}(x)\cong 0 {(\rm p )} \}$, $\mathcal{P}_{H,0}=\{x\mid \varphi_{H}(x)= 0  \}$,  where  p  is  a  prime  number. The  augmentation ideal $I_{G}$ is  defined  as  the  ideal $\{x\mid \varphi_{e}(x)=0\}$. }  
\item{There  exists  an  element,   the  \emph{Bartsch element}  $ 0\neq x\in A(G)$  with the  property  that  $\varphi_{H}(x)=0$ for  every  subgroup $H$. }
\item{If   $p$ is  a prime  number  and  $G$ is  a finite p-group, then the completion  map  $A(G)\to  A(G)_{\hat{I_{G}}}$ is  injective and  the  $I_G$-adical  topology  and  the  $p$-adical topologies  coincide. }
\end{itemize}
\end{theorem}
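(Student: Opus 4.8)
The plan is to handle the five assertions in turn; the first three are standard structural facts about the Burnside ring that I would establish by citing the classical sources, while the genuine work is concentrated in the fourth and fifth.

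\emph{The identification, the marks, and the prime ideals.} For the isomorphism $\pi_G^0(\pt)\cong A(G)$ I would appeal to the Segal--tom Dieck theorem: the equivariant Pontryagin--Thom construction represents stable $G$-self-maps of $S^0$ by $0$-dimensional framed $G$-manifolds, i.e.\ by virtual finite $G$-sets, and the resulting Grothendieck ring is $A(G)$. The homomorphisms $\varphi_H$ are the fixed-point marks $\varphi_H([S])=|S^H|$, extended additively. Injectivity of the total mark $A(G)\to\prod_{(H)}\mathbb{Z}$ I would derive from the ``table of marks'' $\bigl(|(G/K)^H|\bigr)_{(H),(K)}$: ordering conjugacy classes of subgroups by subconjugacy makes this matrix triangular with diagonal entries $|N_G(H)/H|\neq 0$, hence it is invertible over $\mathbb{Q}$ and the mark map is injective with finite cokernel. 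For the prime spectrum I would quote Dress's theorem that every prime of $A(G)$ has the form $\mathcal{P}_{H,p}=\varphi_H^{-1}(p\mathbb{Z})$, with $I_G=\mathcal{P}_{e,0}=\ker\varphi_e$ the augmentation ideal.

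\emph{The Bartsch element.} Read literally, ``$\varphi_H(x)=0$ for every subgroup $H$'' would force $x=0$ by the injectivity just recalled; the intended assertion is that $x$ vanishes on every \emph{proper} subgroup, so that $x$ is detected by $\varphi_G$ alone. To produce such an $x$ I would start from the primitive idempotent $e_{(G)}\in A(G)\otimes\mathbb{Q}$ determined by $\varphi_G(e_{(G)})=1$ and $\varphi_H(e_{(G)})=0$ for $(H)\neq(G)$. By Gluck's formula $e_{(G)}=\tfrac{1}{|G|}\sum_{K\leq G}|K|\,\mu(K,G)\,[G/K]$, so that $|G|\,e_{(G)}=\sum_{K\leq G}|K|\,\mu(K,G)\,[G/K]$ already lies in $A(G)$. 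Setting $x:=|G|\,e_{(G)}$ gives $\varphi_H(x)=0$ for $H\neq G$ and $\varphi_G(x)=|G|\neq 0$, so $x$ is the desired nonzero element; for $G=\mathbb{Z}/p$ this recovers $p[G/G]-[G/e]$. The only point needing care is integrality, which is exactly where Möbius inversion over the subgroup lattice, and implicitly the $p$-group hypothesis controlling the denominators, is used.

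\emph{Completion for $p$-groups.} The crux is the divisibility that $\varphi_H(a)\equiv 0\pmod p$ for every $H\leq G$ and every $a\in I_G$. I would prove it from the elementary congruence $\varphi_H(x)\equiv\varphi_K(x)\pmod p$, valid whenever $H\triangleleft K$ with $K/H\cong\mathbb{Z}/p$ --- for any finite $G$-set $S$ the group $K/H$ acts on $S^H$ with $|S^K|=|(S^H)^{K/H}|\equiv|S^H|\pmod p$ --- chained along a subnormal series $e=H_0\triangleleft H_1\triangleleft\cdots\triangleleft H_m=H$ with factors of order $p$, which exists since $p$-groups are nilpotent; as $\varphi_e(a)=0$ this propagates to $\varphi_H(a)\equiv 0\pmod p$. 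Injectivity of the completion map is equivalent to $\bigcap_n I_G^n=0$. Since $A(G)$ is Noetherian, the ideal $J=\bigcap_n I_G^n$ satisfies $I_G J=J$, so the determinant trick furnishes $a\in I_G$ with $(1-a)J=0$; if $0\neq x\in J$ then some $\varphi_H(x)\neq 0$ because marks are injective, forcing $\varphi_H(a)=1$, which contradicts $\varphi_H(a)\in p\mathbb{Z}$, whence $J=0$. Finally the same divisibility gives $\varphi_H(I_G^n)\subseteq p^n\mathbb{Z}$ for all $H$, and combined with the finiteness of the cokernel of the mark map (a standard refinement of the second assertion, due to tom Dieck) this makes the filtrations $\{I_G^n\}$ and $\{p^nI_G\}$ cofinal on the augmentation ideal, so the $I_G$-adic and $p$-adic topologies agree there. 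I expect this last cofinality bookkeeping, rather than the injectivity, to be the main obstacle, and I would lean on tom Dieck's completion results to streamline it.
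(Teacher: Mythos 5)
Your proposal is correct, and on the two substantive items it takes a genuinely different route from the paper. For parts (i)--(iii) both you and the paper simply invoke the classical sources (Segal/tom Dieck for $\pi_G^0(\pt)\cong A(G)$, the table of marks for injectivity, Dress/Laitinen for the prime spectrum), so there is nothing to compare there. For the Bartsch element the paper follows Bartsch--Clapp--Puppe: for each conjugacy class of proper subgroups $K$ it sets $u_K=[G/K]-|(G/K)^K|\,[G/G]$, so that $\varphi_K(u_K)=0$ while $\varphi_G(u_K)=-|(G/K)^K|\neq 0$, and takes $x=\prod_K u_K$; since each $\varphi_H$ is a ring homomorphism and kills the factor $u_H$, the product vanishes at every proper subgroup and $\varphi_G(x)\neq 0$. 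Your element $|G|\,e_{(G)}$ from Gluck's formula does the same job, at the cost of M\"obius inversion over the subgroup lattice where the paper's product needs only that the marks are ring maps; you are right that ``every subgroup'' in the statement must be read as ``every \emph{proper} subgroup'', but note that neither construction requires $G$ to be a $p$-group, so your parenthetical worry about the $p$-group hypothesis controlling the denominators in Gluck's formula is misplaced --- $|G|\,e_{(G)}$ is integral for every finite $G$. For part (v) the paper only cites Laitinen: Corollary 1.11 there identifies $\bigcap_n I_G^n$ with the common kernel of the marks at subgroups of prime-power order, which for a $p$-group is \emph{all} subgroups, so injectivity follows from part (ii); Proposition 1.12 gives the equality of topologies. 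Your injectivity argument --- the congruence $\varphi_H(a)\equiv\varphi_e(a)\pmod p$ chained along a composition series, combined with the Krull-intersection/determinant trick --- is a correct, self-contained proof of essentially the fact the paper quotes, and is arguably more informative. For the equality of topologies you establish the cofinality $I_G^{n+c}\subseteq p^n A(G)$ (up to prime-to-$p$ denominators) from the divisibility of marks and the finite cokernel, and defer the reverse inclusion $p^m I_G\subseteq I_G^n$ to tom Dieck; that is the same division of labour as the paper's citation of Laitinen, so there is no gap, but be aware that this reverse inclusion (e.g.\ via finiteness and $p$-primarity of $I_G/I_G^2$) is the one place where the $p$-group hypothesis is genuinely indispensable.
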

\begin{proof}

\begin{itemize}

\item{This  is  well  known.  See \cite{segalicm}, \cite{tomdieck}.  }

\item{See \cite{tomdieck},  chapter II, section  8 , pages 155-160. The  image  is  characterized  by  a set of  congruences  for  the number of  generators of  cyclic   subgroups  of  the  \emph{Weyl groups} $NH/H$ for  every conjugacy  class of  subgroups $H$  in $G$  \cite{tomdieck}, section 5 chapter IV, page  256. Alternatively, Theorem 1.3  in \cite{laitinen}, page  41. }
\item{This  is  proven  in \cite{laitinen}, page  43, \cite{dress}. }
\item{This  is  done  in  \cite{mountainpassthmclapp}. The  element  is constructed  as  follows:  let $K$ be  a proper  subgroup of  $G$.  Put  $u_{K}=[G/K]- \mid G/K\mid^{K} [G/G]$. The  element $x$ is  defined  as  the  product  of  all  such  $u_{K}$, each  one  for  every  conjugacy class of  subgroups  in $K$.  }

\item{For  a  detailed  proof  see \cite{laitinen}. The  first  result, Corollary 1.11 in  \cite{laitinen},  follows  from  the  fact  that  in  this  situation the  kernel  of  the  completion  map,  $\cap_{n} I_{G}^{n}$  coincides  with  $\cap \ker(\varphi_{U})$,  where $U$ ranges  among  alll  $p$-sylow  groups.  The  second  result follows  from Frobenius  reciprocity  and  an  analysis  of  the  congruences defining  the  Burnside  ring  as  subring   inside $\underset{H{\rm  in \; ccs(G)}}  {\Pi} \mathbb{Z}$,  proposition 1.12  in \cite{laitinen}, page 44 .  }
\end{itemize}
\end{proof}

Equivariant  Cohomotopy for   proper  actions  of  infinite  discrete  groups  on  finite  $G$-CW complexes  was  defined  in  \cite{lueckeqstable}   via  finite  dimensional equivariant  vector  bundles for  proper, finite   $G$-CW  complexes. Alternative  approaches aer  given  by a  construction  using    nonlinear Fredholm cocycles, which  allow  actions  of  noncompact  Lie  groups on finite  $G$-CW complexes \cite{barcenasnonlinear}, as  well as  a  spectra  version  \cite{barcenasokayama}.   These  approaches  are  compared  in \cite{barcenastesis}. For  convenience,  we  give  the  definition  from  \cite{lueckeqstable}:

\begin{definition}
A $G$-vector bundle over a $G$-$CW$-complex $X$ consists of a real
vector bundle $\xi: E \to X$ together with a $G$-action on $E$ such
that $\xi$ is equivariant and each $g\in G$ acts on $E$ and $X$ via
vector bundle isomorphisms.\\
Let $S^\xi$ denote its fibrewise one-point compactification.
\end{definition}

\begin{definition} \label{definition equivariantcohomotopy}
Let $X$ be a proper $G$-$CW$-complex. Let $\SPHB^G(X)$ be the
category with
\begin{itemize}
\item $Ob(\SPHB^G(X)) = \{ $G$\text{-vector bundles over }X\}$; and
\item a morphism from a vector bundle $\xi: E\to X$ to vector bundle $\mu: F
\to X$ is given by a bundle map $u:S^\xi \to S^\mu$ which covers the identity
$id:X \to X$ and fiberwise preserves the basepoint.\\
(It is not required that $u$ is a fiberwise homotopy equivalence.)
\end{itemize}
\end{definition}

Let $\underline{\mathbb{R}^k}$ denote the trivial vector bundle $X
\times \mathbb{R}^k \to X$.
\begin{definition}
Fix $n\in \IZ$. Let $\xi_0, \xi_1$ be two $G$-vector bundles over
$X$, and let $k_0$ and $k_1$ be two non-negative integers such that
$k_i +n \ge 0$ for $i=0,1$. Then two morphisms
\[ u_i: S^{\xi_i \oplus \underline{\mathbb{R}^{k_{i}}}} \to S^{\xi_i \oplus
\underline{\mathbb{R}^{k_{i+n}}}}\] are called equivalent, if there
are objects $\mu_i$ in $\SPHB^G(X)$ for $i=0,1$ and isomorphisms of
$G$-vector bundles $v: \mu_0 \oplus \xi_0 \cong \mu_1\oplus \xi_1$
such that the following diagram in $\SPHB^G$ commutes up to homotopy
\end{definition}
%the diagram
\[
\xymatrix{ S^{\mu_0 \oplus \underline{\mathbb{R}^{k_1}}} \wedge_X
S^{\xi_0 \oplus \underline{\mathbb{R}^{k_{0}}}} \ar[r]^{id\wedge_X
u_0} \ar[d] & S^{\mu_0 \oplus \underline{\mathbb{R}^{k_1}}} \wedge_X
S^{\xi_0 \oplus \underline{\mathbb{R}^{k_{0}+n}}}\ar[d]
\\
S^{\mu_0 \oplus \xi_0\oplus \underline{\mathbb{R}^{k_{0}+k_{1}}}}
 \ar[d] &
S^{\mu_0 \oplus \xi_0\oplus \underline{\mathbb{R}^{k_{0}+k_{1}+n}}}
\ar[d]
\\
S^{\mu_1 \oplus \underline{\mathbb{R}^{k_{0}}}} \wedge_X S^{\xi_1
\oplus \underline{\mathbb{R}^{k_1}}} \ar[r]^{id\wedge_X u_1} &
S^{\mu_1 \oplus \underline{\mathbb{R}^{k_{0}}}} \wedge_X S^{\xi_1
\oplus \underline{\mathbb{R}^{k_{1}+n}}} }
\]

\begin{definition}
For a proper $G$-$CW$-complex $X$ define \[ \pi^n_G(X) =
\{ \text{equivalence classes of morphisms $u$ as above}\}
\]
\end{definition}
By  introducing   triviality  conditions  on a  $G$-CW  pair, (considering morphisms which  are  fibrewise constant with  the  value  the  point  at  infinity),  equivariant  cohomotopy  groups   are  extended  to  an equivariant  cohomology  theory  with  multiplicative  structure.

We  introduce  a  Burnside  ring  for  infinite  groups, making  out  of   Segal's  remark,    part  1  in Theorem \ref{construction bartschelement}, our definition  for  finite  groups:

\begin{definition}
Let $G$  be  a  group  with  a  finite   model  for  the  classifying  space   for  proper  actions $\eub(G)$.  The  Burnside  ring  for $G$ is  the $0$-th  equivariant  cohomotopy ring  of  the  classifying space  for  proper  actions.  In symbols 
$$A(G)= \pi_{G}^{0} (\eub(G))$$
\end{definition}

Denote  by $A^{\lim}(G)=\lim_{H\in \calfin}A(H) $  the  inverse  limit  of  the  Burnside  rings  of  the  finite  subgroups  of  $G$. Notice  that  this  agrees  with  the  $0,0$-entry  of  the  $E^{2}$-term  of  the  equivariant  Atiyah-Hirzebruch  spectral  sequence. The  following  relations  between  the  Burnside  ring  and  the   inverse-limit  Burnside  ring  are  easy   consequences  of the rational collapse  of the  Atiyah-Hirzebruch spectral  sequence:   

\begin{lemma}
Let  $G$   be  a  discrete  group  admitting  a finite model for the  classifying  space  for  proper  actions $\eub{G}$. 
   
\begin{enumerate}
 \item {The edge Homomorphism $e:A(G)\to  A^{\lim}(G) $  has  nilpotent  kernel  and  cokernel. Its  kernel  is  the   nilradical.}
 \item {The  edge  homomorphism  gives an  isomorphism   between the  set  of   prime  ideals  in $A(G)$ and  $A^{\lim}(G)$ (in  fact an homeomorphism  in  the  Zariski  topology),  by  assigning  a  prime  ideal $I\subset A^{\lim}(H)$ its  inverse  image $e^{-1}(I)\in A(G)$. }
 \item {The  rationalized  Burnside  ring  $\pi_{G}^{0}(\eub(G))\otimes \mathbb{Q}$ does  not  contain nilpotent  elements. }
\end{enumerate}
 
\end{lemma}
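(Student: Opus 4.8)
The plan is to read all three statements off the equivariant Atiyah--Hirzebruch spectral sequence for $X=\eub{G}$ with coefficients in equivariant cohomotopy, whose abutment is $A(G)=\pi_{G}^{0}(\eub{G})$ and whose corner term is $E_{2}^{0,0}=A^{\lim}(G)$. Because $G$ admits a \emph{finite} model for $\eub{G}$, this complex is $l$-dimensional for some $l<\infty$, so $E_{2}^{p,q}=0$ for $p>l$; hence the decreasing multiplicative filtration $F^{\bullet}A(G)$ abutting to the sequence satisfies $F^{l+1}A(G)=0$. The edge homomorphism factors as the projection $A(G)\twoheadrightarrow E_{\infty}^{0,0}$ followed by the inclusion $E_{\infty}^{0,0}\hookrightarrow E_{2}^{0,0}=A^{\lim}(G)$, so $\ker e=F^{1}A(G)$ and $\im e=E_{\infty}^{0,0}$. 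I take the rational collapse of the sequence as given (it is itself forced by the vanishing of the coefficients $\pi^{n}_{H}(\mathrm{pt})\otimes\mathbb{Q}$ for finite $H$ and $n\neq 0$, i.e. rational concentration on the line $q=0$).

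For (1) I would treat the kernel first. By multiplicativity of the sequence, $(\ker e)^{l+1}=(F^{1})^{l+1}\subseteq F^{l+1}=0$, so $\ker e$ is a nilpotent ideal and in particular $\ker e\subseteq\mathfrak{n}$, the nilradical of $A(G)$. For the reverse inclusion I use that the target is reduced: each $A(H)$ with $H$ finite embeds into a product of copies of $\mathbb{Z}$ by the marks (Theorem \ref{construction bartschelement}), so $A^{\lim}(G)=\lim_{H\in\calfin}A(H)$ is a subring of a product of reduced rings and is therefore reduced; if $a$ is nilpotent then $e(a)$ is nilpotent in $A^{\lim}(G)$, hence $e(a)=0$ and $a\in\ker e$. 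This gives $\ker e=\mathfrak{n}$. Nilpotence of the cokernel is exactly Proposition \ref{proposition spectral sequence} applied to $X=\eub{G}$: rational collapse supplies the hypothesis that the $d_{r}$ vanish rationally, and the conclusion is that every $y\in E_{2}^{0,0}=A^{\lim}(G)$ has a power $y^{k}\in\im e$, which is the multiplicative meaning of ``nilpotent cokernel.''

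For (2) everything is formal commutative algebra built on (1). I would factor $e$ as
\[
A(G)\xrightarrow{\ \pi\ }A(G)/\ker e\xrightarrow{\ \iota\ }A^{\lim}(G).
\]
Since $\ker e=\mathfrak{n}$ lies in every prime, $\pi$ induces a homeomorphism $\operatorname{Spec}(A(G)/\ker e)\to\operatorname{Spec}(A(G))$. For $\iota$ the nilpotent-cokernel property says every $y$ in the target satisfies a monic relation $y^{k}-\iota(r)=0$, so $\iota$ is an injective integral extension. A short chase then shows $\iota^{*}$ is a bijection of spectra: if $y\in\mathfrak{q}$ then $y^{k}\in\im e$ lies in $\mathfrak{q}\cap\im e$, which is pinned down by $\iota^{*}(\mathfrak{q})$, giving injectivity, while lying-over for the integral extension gives surjectivity. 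Continuity is automatic and going-up makes $\iota^{*}$ closed, so $\iota^{*}$ is a homeomorphism; composing with $\pi^{*}$ yields the assertion, with $I\mapsto e^{-1}(I)$ as the explicit map.

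For (3) I pass to rational coefficients. Flatness of $\mathbb{Q}$ gives $\ker(e\otimes\mathbb{Q})=(\ker e)\otimes\mathbb{Q}$ and $\im(e\otimes\mathbb{Q})=E_{\infty}^{0,0}\otimes\mathbb{Q}$, and rational collapse forces $E_{\infty}^{0,0}\otimes\mathbb{Q}=E_{2}^{0,0}\otimes\mathbb{Q}=A^{\lim}(G)\otimes\mathbb{Q}$, so $e\otimes\mathbb{Q}$ is onto the reduced ring $A^{\lim}(G)\otimes\mathbb{Q}$ (again a subring of a product of the reduced rings $A(H)\otimes\mathbb{Q}$). A nilpotent element of $A(G)\otimes\mathbb{Q}$ maps to a nilpotent, hence zero, element of this reduced quotient, so it lies in $(\ker e)\otimes\mathbb{Q}$; thus $A(G)\otimes\mathbb{Q}$ is reduced exactly when $(\ker e)\otimes\mathbb{Q}=0$, i.e. when the nil ideal $F^{1}A(G)$ is torsion. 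This is the crux, and it is where rational collapse does the real work: the graded pieces $F^{p}/F^{p+1}=E_{\infty}^{p,-p}$ satisfy $E_{\infty}^{p,-p}\otimes\mathbb{Q}=E_{2}^{p,-p}\otimes\mathbb{Q}$, and for $p\ge 1$ these carry the coefficient system $\pi^{p}_{G}(G/?)$, which vanishes rationally. Hence $F^{1}A(G)$ is torsion and $e\otimes\mathbb{Q}$ is an isomorphism onto the reduced ring $A^{\lim}(G)\otimes\mathbb{Q}$, so $\pi_{G}^{0}(\eub{G})\otimes\mathbb{Q}$ has no nilpotents. The main obstacle throughout is precisely this last input — the rational vanishing of positive-degree equivariant cohomotopy of the orbits, which upgrades ``$\ker e$ is the nilradical'' (an integral statement) to ``$\ker e$ is rationally trivial'' and thereby turns the edge map into a rational isomorphism; everything else is bookkeeping with the multiplicative filtration and standard integral-extension facts on prime spectra.
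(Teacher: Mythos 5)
Your proof is correct and follows exactly the route the paper intends: the paper states this lemma without a written proof, merely asserting it is an ``easy consequence of the rational collapse of the Atiyah--Hirzebruch spectral sequence,'' and your argument is precisely the fleshing-out of that assertion --- nilpotence of the kernel from the finite multiplicative filtration, nilpotence of the cokernel from Proposition \ref{proposition  spectral  sequence}, the spectrum statement by standard integral-extension arguments, and reducedness of $A(G)\otimes\mathbb{Q}$ from the rational vanishing of the positive-filtration $E_{2}$-terms. No gaps; your write-up supplies the details the paper omits.
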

 
In the  rest  of  the  section  we will  describe  a completion  theorem for families  of p- groups  inside  finite  subgroups  of discrete  groups, which is  the  main  computational  tool for  the  computation  of  equivariant  cohomology  lengths needed  for  the proof  of  Theorem \ref{theorem mountainpass}.
 This  amounts  to a  generalization  of  the  Segal  Conjecture  for  families  \cite{haeberlyjackowskimay}.
 The  result  was  proved  in  \cite{barcenastesis}, Theorem 13 in page  58, although similar  results  have  been  proved in \cite{lueckoliverchern}, \cite{lueckolivervectorbundles} and  \cite{luecksegal}, from where  the  crucial  ideas  and methods come.

Let $G$ be  a  discrete group and $\mathcal{F}$ be  a  family  of
finite  subgroups  of $G$,  closed under  conjugation and  under
subgroups. Fix a  finite proper  $G$-CW complex  $X$ and  a  finite
dimensional  proper  $G$-CW  complex $Z$ whose  isotropy  subgroups lie  in $\mathcal{F}$. Let $f: X \to Z$ be  a  $G$-map. Regard $\pi_{G}^{0}(X)$
as a  module over $\pi_{G}^ {0}(Z)$. 

\begin{definition}
The  augmentation  ideal  with   respect  to   the  family $\mathcal{F}$  is  defined as  the  kernel  of  the  homomorphism
$$I=I_{G, \mathcal{F}, Z}= ( \pi_{G}^{0}(Z)\overset{{\rm
    res}_{G}^{H}\circ i^{*}}{\longrightarrow} \prod_{H \in\mathcal{F}}
    \pi_{H}^{0}(Z^ {0}) )$$

\end{definition}

\begin{proposition}
Let  $\mathcal{F}$ be a  family  of  finite $p$-subgroups. Assume  that  there  is   an  upper  bound  for  the order of  subgroups  in $\mathcal{F}$. 

Let $\mathcal{P}\subset \pi_{H}^{0}(\pt)$  be   a  prime  ideal. 

Then, the  ideal  $$I_{H, \mathcal{F}\cap H, \pt }:= \ker \pi_{H}^{0}(\pt)\to \prod_{K\in \mathcal{F}} \pi_{K\cap H}^{0}(\pt)$$ 
is  contained  in  $\mathcal{P}$  if   $\mathcal{P}$ contains  the   image  of  the  structure  map  for  $H$ 
$$\phi_{H}: \lim_{ K\in \mathcal{F}}  \pi_{K}^{0}(\pt) \to  \pi_{H}^{0}(\pt) $$
\end{proposition}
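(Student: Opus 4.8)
The plan is to pass to marks and prime ideals of the Burnside ring and to argue entirely inside $A(H)=\pi_H^0(\pt)$. First I would invoke the first two items of Theorem~\ref{construction bartschelement}: the marks $\varphi_M\colon A(H)\to\IZ$, one for each conjugacy class of subgroups $M\le H$, are ring homomorphisms that jointly embed $A(H)$ into $\prod_{[M]}\IZ$, and the mark of a restriction $\res^H_K(x)$ at $M\le K$ equals $\varphi_M(x)$. Hence $\res^H_{K\cap H}(x)=0$ iff $\varphi_M(x)=0$ for every $M\le K\cap H$. Since $\mathcal F$ is closed under conjugation, intersection and subgroups and $H$ is finite, the groups occurring as such $M$ (as $K$ runs over $\mathcal F$) are exactly the $p$-subgroups of $H$ lying in $\mathcal F$; call this finite set $\mathcal F_H$. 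This gives the identification
\[ I_{H,\mathcal F\cap H,\pt}=\bigcap_{M\in\mathcal F_H}\ker\varphi_M=\bigcap_{M\in\mathcal F_H}\mathcal P_{M,0}. \]

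Next I would determine the primes lying above the augmentation ideal. Each $\varphi_M$ is onto $\IZ$ (as $\varphi_M(1)=1$), so $A(H)/\mathcal P_{M,0}\cong\IZ$, and by pulling back $(0)$ and $(q)$ the only primes over $\mathcal P_{M,0}$ are $\mathcal P_{M,0}$ itself together with the $\mathcal P_{M,q}$ for $q$ a prime. As $V$ turns the finite intersection above into a union, this yields
\[ V\!\left(I_{H,\mathcal F\cap H,\pt}\right)=\{\mathcal P_{M,q}\mid M\in\mathcal F_H,\ q\in\{0\}\cup\{\text{primes}\}\}. \]
Thus the proposition is equivalent to the assertion that every prime $\mathcal P$ containing $\im\phi_H$ has the form $\mathcal P_{L,q}$ with $L$ subconjugate into $\mathcal F$, i.e. with $L\in\mathcal F_H$ up to the coincidences among primes.

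The core of the argument, and the step I expect to be the main obstacle, is to pin down which primes of $A(H)$ contain the image of the structure map $\phi_H\colon\lim_{K\in\mathcal F}A(K)\to A(H)$ and to match them with the family primes $\mathcal P_{M,q}$, $M\in\mathcal F_H$. I would first reduce, via the rational collapse of the Atiyah--Hirzebruch spectral sequence recorded in Proposition~\ref{proposition spectral sequence}, to a statement about the inverse-limit ring and the edge homomorphism, so that questions about $\im\phi_H$ become questions about marks of compatible families over $\mathcal F$. The decisive input is that $\mathcal F$ consists of $p$-groups: by Theorem~\ref{construction bartschelement} the primes $\mathcal P_{M,p}$ of a $p$-group collapse to the single prime cut out by $\varphi_1\bmod p$, and the $I_G$-adic and $p$-adic topologies agree, so that $\operatorname{Spec}$ of the relevant completion is supported exactly on the mod-$p$ marks at $p$-subgroups. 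Combined with the bounded-order hypothesis, which keeps the indexing system finite, this is the generalized Segal conjecture for families of \cite{haeberlyjackowskimay}, \cite{luecksegal}, \cite{barcenastesis}, and it is what forces any prime containing $\im\phi_H$ to be detected at a $p$-subgroup $L\le H$ lying in $\mathcal F$.

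With that in hand the proposition follows by assembling the pieces: a prime $\mathcal P\supseteq\im\phi_H$ is $\mathcal P_{L,q}$ with $L$ conjugate into $\mathcal F_H$, hence lies in $V(I_{H,\mathcal F\cap H,\pt})$ as computed in the second step, which is exactly the inclusion $I_{H,\mathcal F\cap H,\pt}\subseteq\mathcal P$. I expect the genuine difficulty to concentrate at the exceptional prime $q=p$: there the marks at distinct $p$-subgroups are only congruent mod $p$ rather than equal, so $L$ can be located only up to the mod-$p$ coincidences among primes, and it is precisely the completion statement of Theorem~\ref{construction bartschelement} together with the Segal-type input above that closes this gap; away from $p$ (and for $q=0$) the claim is immediate from the mark description of the augmentation ideal established in the first step.
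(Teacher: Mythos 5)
Your first two steps (the mark description of $I_{H,\mathcal{F}\cap H,\pt}$ as $\bigcap_{M}\mathcal{P}_{M,0}$ and the identification of $V(I_{H,\mathcal{F}\cap H,\pt})$ with the set of ``family primes'' $\mathcal{P}_{M,q}$) are correct and match the framework the paper works in, and your overall skeleton --- show that every prime containing the relevant image is of the form $\mathcal{P}_{L,q}$ with $L$ subconjugate into $\mathcal{F}$ --- is also the paper's. The problem is that the step you yourself flag as ``the main obstacle'' is exactly the step you do not supply, and the tool you propose for it is not available. The Segal conjecture for families (Theorem \ref{theorem segalconjecture}) and the comparison of the $I$-adic and $p$-adic topologies (Corollary \ref{corollary p completion}) are deduced in the paper \emph{from} this proposition, via Proposition \ref{proposition primeidealssegal}; invoking them here is circular. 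Even setting the circularity aside, you give no mechanism by which a prime containing the image of the structure map is ``detected at a $p$-subgroup lying in $\mathcal{F}$'' --- that is precisely the assertion to be proved, not a consequence of the formal properties of completions.

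What the paper does at this point, and what is missing from your proposal, is an explicit element construction inside the inverse limit $\lim_{K\in\mathcal{F}}\pi_{K}^{0}(\pt)$. Choose $m$ divisible by the order of every $K\in\mathcal{F}$ (this is where the bounded-order hypothesis actually enters), let each $K$ act freely on a set $S_{m}$ of cardinality $m$ via an embedding into the symmetric group, and form the two compatible families $\bigl\{S_{m}-|S_{m}|\,[K/K]\bigr\}_{K}$ and $\bigl\{S_{m}/Syl_{p}-|S_{m}/Syl_{p}|\,[K/K]\bigr\}_{K}$. Evaluating the marks of their images under $\phi_{H}$ at $M$, and using that $(S_{m}/Syl_{p})^{L}\neq\emptyset$ exactly when $L$ is a $p$-group while $|S_{m}/Syl_{p}|$ is prime to $p$, is what forces a prime $\mathcal{P}(M,q)$ containing the image to have $M$ trivial (in residue characteristic $0$) or $M$ a $p$-group; the classification of primes of the Burnside ring from Theorem \ref{construction bartschelement} (in particular $\mathcal{P}(M,p)=\mathcal{P}(\{e\},p)\supset\mathcal{P}(\{e\},0)$) then yields $I_{H,\mathcal{F}\cap H,\pt}\subseteq\mathcal{P}$. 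Without these (or equivalent) test elements your plan does not close, so as written the proposal has a genuine gap at its central step.
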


\begin{proof}

Let  $m$  be a positive  integer  number  divided  by  all  orders  of  subgroups  in $\mathcal{F}$. For  a  given  subgroup $K$ in the  family, Let  $u=\{u_{1}, \ldots, u_{m}\}$  be  a finite set of  cardinality $m$ with  a  free $K$-action. For  example, $u$  may  be  chosen  to  be  a   disjoint  union  of  $\frac{m}{\mid K\mid}$ copies  of $ K$. This  gives  an  injective  homomorphism  into  the  symmetric  group  in $m$  letters, $\rho: K\to S_{m}$. For  a  prime  $p$,  let  $Syl_{p}$  be  the  p-Sylow  subgroup of  $S_{m}$.  

 Let $S_{m}[\rho]$ be  the   set  $S_{m}$ with the free  $K$- action  given by  $k,s\mapsto \rho(h)(s)$ and  $S_{m}/Syl_{p}$    be  the  set  with the  induced  $K$-action.  Notice  that the fixed  point  set  $S_{m}/Syl_{p}^{L}$  is nonempty  if and  only  if  $L$ is a  p-subgroup.  This construction  is  compatible  with  morphisms between  subgroups  in $\mathcal{F}$ in the  sense  that  an  homomorphism $K\to K^{'}$  between  groups  in the family  induces  a  map  taking  the  free $ K^{'}$-set $ S_{m}$ to the   free $K$-set  $S_{m}$  and  the  same for  the   homogeneous  set  $S_{m}/Syl_{p}$. 

Consider  the elements 
$$\{( S_{m} - \mid S_{m} \mid K/K)\big {\}  } _{K \in\mathcal{F}}$$
  $$\{  (S_{m}/Syl_{p}- \mid S_{m}/Syl_{p}\mid K/K  \big{\}}_{K\in \mathcal{F}}$$

Let  $\mathcal{P}$ be  a  a prime  ideal  containing  the  image  of  the  structure  map  under  $\phi_{H}$.  By the  structure  of  the  prime  ideal  spectrum,  $\mathcal{P}$  is  of  the  form $\mathcal{P}(M, p)$, where $M$  is  a  subgroup  of  $H$  and  $p$  is  a  prime  number  or  zero.  By  assumption, $\mathcal{P}$ contains  the  image   under  the  structure  map of the  elements above. 
Since $\varphi^{M}(S_{m}-\mid S_{m}\mid)=\mid S_{m}\mid$ and  $\varphi^{M}(S_{m}/Syl_{p} -S_{m}/Syl_{p})=\mid S_{m}/Syl_{p}\mid ^{M} - \mid S_{m}/Syl_{p}\mid$  and  both  elements  belong to  $p\mathbb{Z}$, because  $S_{m}/Syl_p$  has  order prime  to  $p$,  we  conclude  that  either  $p=0$  or  $M$ is  a  $p$-group. 

If  $M$ is a  $p$-group,  then $\mathcal{P}(M, p)=\mathcal{P}(\{e\}, p)\supset \mathcal{P}(\{e\},0)\supset I_{\mathcal{F}, H, \pt}$.  If  $p$=0, then  $\mid S^{M}\mid -\mid S_{m}\mid=0$, and  hence  $M=\{e\}$. For any  subgroup $K^{'} $ of every  element $K\in \mathcal{F}\cap  H$, $\mathcal{P}(K^{'}, 0)=\mathcal{P}(\{e\},0)$,  since $K^{'}$  is  a  $p$-group, hence $P$ contains  the  intersection  of  all such  ideals, which  is  $I_{\mathcal{F}, H, \pt}$.

\end{proof}

\begin{proposition}\label{proposition primeidealssegal}
Let   $L$  be   an  $n$-dimensional  $G$-CW  complex with isotropy  in the  family   $\mathcal{F}$ consisting  of  finite  $p$-subgroups inside  the discrete  group $G$.  Let  $f:G/H\to L$  be a  $G$-map  and   $\mathcal{P}\subset \pi_{H}^{0}(\pt)$  be   a  prime  ideal. Then $I_{\mathcal{F}\cap H, \pt }:= \ker \pi_{H}^{0}(\pt)\to \prod_{K\in F} \pi_{K\cap H}^{0}(\pt)$ is  contained  in  $\mathcal{P}$  if   $\mathcal{P}$  contains  the   image   of  $I_{\mathcal{F}, Z}$ under  ${\rm  ind}_{H\to G}\circ f^{*}:\pi_{G}^{0}(L)\to  \pi_{H}^{0}(\pt)$
\end{proposition}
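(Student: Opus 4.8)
The plan is to argue by induction on the dimension $n$ of $L$, arranging matters so that the geometric input is concentrated in the zero-dimensional case, which is precisely what the preceding Proposition supplies. For the base case $n=0$ the complex $L$ is a disjoint union of orbits $G/K$ with $K\in\mathcal{F}$, and since $G/H$ is a single orbit the $G$-map $f$ factors up to $G$-homotopy through one such cell $G/K$, so that $H$ is subconjugate to $K$. Under the induction isomorphisms $\pi^0_G(G/K)\cong\pi^0_K(\pt)$ and $\pi^0_G(G/H)\cong\pi^0_H(\pt)$ the composite ${\rm ind}_{H\to G}\circ f^*$ becomes evaluation at the cell followed by the restriction along $H\hookrightarrow K$, and I would identify the image of $I_{\mathcal{F},L}$ under this composite with the image of the structure map $\phi_H\colon \lim_{K\in\mathcal{F}}\pi^0_K(\pt)\to\pi^0_H(\pt)$ appearing in the preceding Proposition. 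Once this identification is in place, the hypothesis that $\mathcal{P}$ contains that image is exactly the hypothesis of the preceding Proposition, which then yields $I_{\mathcal{F}\cap H,\pt}\subseteq\mathcal{P}$.

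For the inductive step I would use the skeletal filtration together with the cofibre sequence $L^{(n-1)}\hookrightarrow L\to L/L^{(n-1)}$ and the resulting long exact sequence in $\pi^*_G$. By the induction structure the connecting maps and the edge homomorphisms are compatible with ${\rm ind}_{H\to G}$, and the relative term $\pi^*_G(L,L^{(n-1)})$ is assembled from cells $D^n\times G/K$ with $K\in\mathcal{F}$ and carries the structure of a module over $\pi^0_G(L)$. The goal is to show that a prime $\mathcal{P}$ containing the image of $I_{\mathcal{F},L}$ already contains the image of $I_{\mathcal{F},L^{(n-1)}}$ under the restriction of $f$ to the $(n-1)$-skeleton, so that the inductive hypothesis applies. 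The mechanism I would invoke is multiplicativity of the augmentation ideal combined with the rational collapse of the Atiyah--Hirzebruch spectral sequence from Proposition \ref{proposition  spectral  sequence}: after replacing a class by a suitable power, elements of $I_{\mathcal{F},L}$ lift compatibly across the long exact sequence, so that the relative contribution is annihilated and the constraint on $\mathcal{P}$ descends to the skeleton.

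The hard part will be exactly this descent across the long exact sequence, since $\iota^*\colon\pi^0_G(L)\to\pi^0_G(L^{(n-1)})$ is in general neither injective nor surjective, so the containment for $\mathcal{P}$ does not transfer formally. I would circumvent this by working prime-ideal-wise, using that every prime of $\pi^0_H(\pt)\cong A(H)$ has the form $\mathcal{P}=\mathcal{P}(M,q)$ for a subgroup $M\le H$ and a prime $q$ or $q=0$. The explicit Sylow-type elements $S_m-\lvert S_m\rvert\,K/K$ and $S_m/Syl_p-\lvert S_m/Syl_p\rvert\,K/K$ constructed in the preceding Proposition, pulled back through $f$ and through the cellular structure, force either $q=0$ or $M$ to be a $p$-group exactly as before, and this dichotomy is insensitive to the skeletal filtration. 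Thus the inductive step only has to propagate this numerical constraint on $\mathcal{P}$, while the genuinely geometric identification with the structure map $\phi_H$ is carried out once and for all in the base case.
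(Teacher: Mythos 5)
Your proposal assembles the right ingredients --- the preceding proposition reducing everything to the image of the structure map $\phi_H$, the rational collapse of the Atiyah--Hirzebruch spectral sequence, and primality of $\mathcal{P}$ to absorb powers --- but the architecture you choose, induction on the skeleta of $L$, creates a step that you yourself flag as ``the hard part'' and then do not close. That step is a genuine gap. The hypothesis of the proposition is that $\mathcal{P}$ contains $({\rm ind}_{H\to G}\circ f^{*})(I_{\mathcal{F},L})$, and the restriction $\pi^{0}_{G}(L)\to\pi^{0}_{G}(L^{(n-1)})$ need not carry $I_{\mathcal{F},L}$ onto $I_{\mathcal{F},L^{(n-1)}}$; passing to a skeleton therefore \emph{strengthens} the hypothesis you are permitted to use, and the ``descent'' of the containment is essentially the content of the proposition itself, not a formal consequence of the long exact sequence of the pair $(L,L^{(n-1)})$. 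The appeal to the Sylow-type elements in your last paragraph does not repair this, because in order to use them you must first exhibit them (or powers of them) as images of actual elements of $I_{\mathcal{F},L}\subset\pi^{0}_{G}(L)$ under ${\rm ind}_{H\to G}\circ f^{*}$, and that realization problem is untouched by the skeletal filtration. The base-case identification is also not correct as stated: for $L^{(0)}$ a disjoint union of orbits, $({\rm ind}\circ f^{*})(I_{\mathcal{F},L^{(0)}})$ only sees the orbit types actually occurring in $L$, whereas $\phi_H$ is the structure map out of the inverse limit over the entire family $\mathcal{F}$.

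The paper's proof dispenses with the induction entirely. Given $a\in\lim_{K}I_{\mathcal{F}\cap K,\pt}$ (in particular the compatible Sylow families from the preceding proposition), let $x=\psi^{-1}(a)$ be the corresponding class in $H^{0}_{\mathbb{Z}Or(G)}(E\mathcal{F};\pi^{0}_{?}(\pt))$. The proposition on the Atiyah--Hirzebruch spectral sequence produces a positive integer $k$ and a class $y\in\pi^{0}_{G}(E\mathcal{F})$ with ${\rm Edge}(y)=x^{k}$; pulling $y$ back along the classifying map of $L$ gives an element of $I_{\mathcal{F},L}$ whose image under ${\rm ind}_{H\to G}\circ f^{*}$ is, by compatibility, $\phi_H(a)^{k}$. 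The hypothesis then places $\phi_H(a)^{k}$ in $\mathcal{P}$, primality yields $\phi_H(a)\in\mathcal{P}$, and the preceding proposition concludes that $I_{\mathcal{F}\cap H,\pt}\subset\mathcal{P}$. If you replace your skeletal induction by this direct lifting argument, your proof goes through; as written, the inductive step is missing.
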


\begin{proof}
Let  $\mathcal{P}$  be  a prime  ideal containing $I_{\mathcal{F}, H, \pt}$. By  the previous proposition,  we  can  assume  that $\mathcal{P}$  contains  the  image  of  the  structural  map  $\phi_{H}$.

Let  $\psi: H^{0}_{\mathbb{Z}Or(G)}(\EGF{G}{\mathcal{F}}, \pi_{G}(G/?))\to \lim_{K}\pi_{K}^{0}(\pt)$  be the isomorphism  given  by  
assigning  to an  element  $x\in H^{0}_{\mathbb{Z}Or(G)}(\EGF{G}{\mathcal{F}};\pi_{K}^{0}(\pt)) $  the  element whose  component  under  the  structural  map $\phi_{K}$  is  the  image  image under the  map  induced  by  the ($G$-homotopically) unique map $u_{K}: G/K\to \EGF{G}{\mathcal{F}}$,  followed by  the induction  isomorphism 

\begin{multline*} 
$$ H^{0}_{\mathbb{Z}Or(G)}(\EGF{G}{\mathcal{F}}; \pi_{G}^{0}(G/?))\to \\  H^{0}_{\mathbb{Z}Or(G)}(G/K, \pi_{G}^{0}(G/?))\to   H^{0}_{\mathbb{Z}Or(K)}(\pt, \pi_{K}^{0}(K/?)) \cong\pi_{K}^{0}(\pt)   $$ 
 \end{multline*}
 Given  an element  $a\in \lim_{K}    I_{\mathcal{F}\cap K, \pt} $,  denote  by  $x$  its image  under $\psi^{-1}$. By proposition  \ref{proposition spectral sequence}, there  exist a  positive  integer $k$  and  an  element  $y\in \pi_{G}^{0}(\EGF{G}{\mathcal{F}})$  such  that  $edge(y)=x^{k}$,  which  is furthermore  an  element  of  $I_{\mathcal{F}, G, L}$. 

The  structure  map $\phi_{H}: \lim \pi_{K}^{0}(\pt) \to \pi_{H}^{0}(\pt)$ maps  $a^{k}$ to $\mathcal{P}$. Because $ \mathcal{P}$ is  a  prime  ideal, the  map ${\rm ind}\circ f^{*}$  maps $a  $  to $\mathcal{P}$. 
\end{proof}

\begin{theorem}[Segal  Conjecture  for  families  of  finite $p$-subgroups]\label{theorem segalconjecture}
Let $G$ be  a  discrete group and $\mathcal{F}$ be  a  family  of
subgroups  of  order $p$ of $G$ closed under  conjugation and  
subgroups. Fix a  finite proper  $G$-CW complex  $X$ and  a  finite
dimensional  proper  $G$-CW  complex $Z$ whose  isotropy  subgroups lie  in $\mathcal{F}$ and 
have  bounded  order. Let $f: X \to Z$ be  a  $G$-map. Regard $\pi_{G}^{0}(X)$
as a  module over $\pi_{G}^ {0}(Z)$ and  set 

$$I=I_{\mathcal{F}, Z}=\ker ( \pi_{G}^{0}(Z)\overset{{\rm
    res}_{G}^{H}\circ i^{*}}{\longrightarrow} \prod_{H \in\mathcal{F}}
    \pi_{H}^{0}(Z^ {0}) )$$
then 

$$\lambda_{X, \mathcal{F}, f}^{m}: \bigl\{ \pi_{G}^{m}(X) \diagup
  I^{n} \cdot \pi_{G}^{m}(X)   \bigr\}  \to   \bigl\{ \pi_{G}^{m}(
  E_{\mathcal{F}}(G)\times   X^{n-1}) \bigr\}$$ 
    
is  an isomorphism of pro-groups. Also, the  inverse  system  
$$\bigl \{ \pi_{G}^{m}((E_{\mathcal{F}}(G)\times X)^{n})\bigr\}_{n\geq
  1}$$
satisfies  the Mittag-leffler  condition. In particular
$${\rm lim}^{1} \pi_{G}^{m}((E_{\mathcal{F}}(G)\times X)^{n})=0$$
and $\lambda_{X, \mathcal{F},f}$ induces  an isomorphism 
$$\pi_{G}^{m}(X)_{I}\overset{\cong}{\longrightarrow}
\pi_{G}^{m}(E_{\mathcal{F}}(G)\times  X) \cong \lim_{n} \pi_{G}^{m}((E
_{\mathcal{F}}(G) \times X)^{n}) $$
\end{theorem}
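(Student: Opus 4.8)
The plan is to recognize the statement as a completion theorem of Atiyah--Segal type and to follow the strategy of Haeberly--Jackowski--May, adapted to the family $\mathcal{F}$. For fixed $m$, both assignments
$$X \longmapsto \bigl\{ \pi_G^m(X)/I^n\cdot\pi_G^m(X) \bigr\}_{n}, \qquad X \longmapsto \bigl\{ \pi_G^m\bigl((E_{\mathcal{F}}\times X)^{n}\bigr) \bigr\}_{n}$$
define cohomology theories with values in the category of pro-groups, in the $G$-CW variable $X$, and $\lambda^m_{X,\mathcal{F},f}$ is a natural transformation between them. First I would check the Eilenberg--Steenrod axioms for these pro-theories: homotopy invariance, the long exact sequence of a pair, and the disjoint union axiom all descend from the corresponding properties of $\pi_G^*$, since passing to $I^n$-quotients and to skeleta of $E_{\mathcal{F}}\times(-)$ is exact enough in the pro-category.

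Granting this, the argument proceeds by induction over the cells of the finite proper $G$-CW complex $X$. Using the long exact sequence of the pair $(X_k,X_{k-1})$ together with the five lemma for pro-groups, it suffices to prove that $\lambda$ is a pro-isomorphism when $X$ is a single orbit $G/H$ with $H\in\mathcal{F}$. For such an orbit the induction structure of equivariant cohomotopy gives $\pi_G^*(G/H)\cong\pi_H^*(\pt)$, so the problem reduces to a completion theorem for the finite $p$-group $H$. Here I would invoke part (v) of Theorem~\ref{construction bartschelement}: for a finite $p$-group the completion map on $A(H)=\pi_H^0(\pt)$ is injective and the $I_H$-adic topology coincides with the $p$-adic one. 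This is exactly the classical Segal-conjecture input that settles the orbit case.

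The technical heart of the reduction is matching the global $I$-adic topology with the correct topology fiberwise at each orbit, and this is where Proposition~\ref{proposition primeidealssegal} enters: it shows that every prime ideal of $\pi_H^0(\pt)$ containing the image of $I_{\mathcal{F},Z}$ under ${\rm ind}_{H\to G}\circ f^*$ already contains $I_{\mathcal{F}\cap H,\pt}$. Hence the $I$-adic filtration restricts, at each orbit, to a filtration cofinal with the $p$-adic one, so the two pro-systems agree there. The boundedness hypothesis on the orders of subgroups in $\mathcal{F}$ is essential, just as in the proof of the preceding proposition: it permits the choice of a single integer $m$ divisible by all relevant orders, giving uniform control of the elements $S_m-|S_m|\,K/K$ across the whole family.

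Finally, for the Mittag--Leffler statement and its consequence $\lim^1=0$: the tower $\{\pi_G^m(X)/I^n\pi_G^m(X)\}$ has surjective structure maps and is therefore trivially Mittag--Leffler, and the pro-isomorphism $\lambda$ transports this property to $\{\pi_G^m((E_{\mathcal{F}}\times X)^n)\}$. A Mittag--Leffler tower of groups has vanishing $\lim^1$, so ${\rm lim}^1\,\pi_G^m((E_{\mathcal{F}}\times X)^n)=0$, and the $\lim$--$\lim^1$ exact sequence then produces the asserted isomorphism $\pi_G^m(X)_I\overset{\cong}{\longrightarrow}\pi_G^m(E_{\mathcal{F}}\times X)$ upon passing to inverse limits. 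The hard part will be the orbit-level completion together with the prime-ideal matching of the third step: once that single case is in place everything else is formal, but making the fiberwise $p$-adic identification uniform over the family---with the boundedness hypothesis carrying the weight---demands care.
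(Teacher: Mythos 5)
Your overall architecture coincides with the paper's: both functors are pro-group valued cohomology theories with Mayer--Vietoris sequences, so the five lemma for pro-modules reduces the claim by induction over cells to a single orbit, where the induction structure identifies $\pi_G^m(G/H)$ with $\pi_H^m(\pt)$; and your use of Proposition~\ref{proposition primeidealssegal} to show that the ideal $J$ generated by the image of $I_{\mathcal{F},Z}$ under ${\rm ind}_{H\to G}\circ f^*$ and the ideal $I_{\mathcal{F}\cap H,\pt}$ have the same radical, hence define cofinal adic filtrations on the Noetherian ring $\pi_H^0(\pt)$, is exactly the paper's second step. The Mittag--Leffler and $\lim$--$\lim^1$ bookkeeping at the end is also fine. (One small slip: since $X$ is only assumed to be a finite \emph{proper} $G$-CW complex, the orbit case is $G/H$ with $H$ an arbitrary finite subgroup, not necessarily $H\in\mathcal{F}$; the relevant ideal downstairs is $I_{\mathcal{F}\cap H,\pt}$, not the full augmentation ideal of $A(H)$.)

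There is, however, a genuine gap at the heart of the orbit case. You assert that part (v) of Theorem~\ref{construction bartschelement} --- injectivity of $A(H)\to A(H)_{\hat{I_H}}$ and the coincidence of the $I_H$-adic and $p$-adic topologies for a finite $p$-group --- ``is exactly the classical Segal-conjecture input that settles the orbit case.'' It is not. That statement is pure Burnside-ring algebra: it concerns only the ring $A(H)$ in degree zero, provides no comparison map to the target pro-group $\bigl\{\pi_H^m\bigl(E_{\mathcal{F}\cap H}^{(n-1)}\bigr)\bigr\}$, and says nothing for $m\neq 0$. What the orbit case actually requires --- and what the paper invokes --- is the completion theorem for families inside finite groups of Haeberly--Jackowski--May, i.e.\ the generalized Segal conjecture for $H$ relative to the family $\mathcal{F}\cap H$, which identifies $\bigl\{\pi_H^m(\pt)/I^n_{\mathcal{F}\cap H,H,\pt}\bigr\}$ with $\bigl\{\pi_H^m\bigl(E_{\mathcal{F}\cap H}^{(n-1)}\bigr)\bigr\}$ as pro-groups. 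This is a deep homotopy-theoretic theorem resting on Carlsson's work and cannot be recovered from the algebra of $A(H)$; in the paper, Theorem~\ref{construction bartschelement}(v) is used only later, in Corollary~\ref{corollary p completion}, to trade the $I$-adic for the $p$-adic completion. Once you replace your appeal to part (v) by the Haeberly--Jackowski--May theorem, the remainder of your argument goes through essentially as in the paper.
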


\begin{proof}

Since  both  functors   have   Mayer-Vietoris  sequences,  both  of  the  systems  satisfy  the  Mittag-Leffler  condition  and  in  view  of  the 5-lemma  for pro-modules, \cite{atiyahmacdonald},  section 2,   an  inductive  argument  can  be  used  to  reduce  the  problem  to  the   situation of  $X=G/H$,  and  where  $H$  is  a  finite group.

In  this  case,  there exists   a  commutative  diagram 

 $$\xymatrix{\pi_{G}^{0}(Z) \ar[r]^{f^{*}} &
  \pi_{G}^{m}(G/H) \ar[d]_{{\rm ind}_{H\to
  G}^{\cong} } \\
 A(H) \ar[r]_{\cong} &  \pi_{H}^{0}(\{*\}) }  $$
 
Hence,  the  map   of  pro-modules $$\lambda_{X, \mathcal{F}, f}^{m}: \bigl\{ \pi_{G}^{m}(X) \diagup
  I^{n} \cdot \pi_{G}^{m}(X)   \bigr\}  \to   \bigl\{ \pi_{G}^{m}(
  E_{\mathcal{F}}(G)\times   X^{n-1}) \bigr\}$$ 
 factorizes  as follows  

$$\xymatrix { \bigl\{ \pi_{G}^{m}(G/H) \diagup
  I^{n} \cdot \pi_{G}^{m}(G/H)   \bigr\} \ar[r] \ar[d] &   \bigl\{ \pi_{H}^{m}(\pt) \diagup
  J^{n}   \bigr\} \ar[d] \\     \bigl\{ \pi_{G}^{m}(
  E_{\mathcal{F}}(G)\times   G/H^{n-1}) \bigr\}    &   \bigl \{  \pi_{H}^{0}(\pt)/ I^{n}_{\mathcal{F}\cap  H, H, \pt} \bigr \}  \ar[l]^{\cong} }$$

Where  $J$ is  the  ideal   generated  by  the  image of  $I$ under  ${\rm  ind}\circ f^{*}$ and  the  lower  horizontal  map  is  an  isomorphism  due  to  the  completion theorem  for  families  inside  finite  groups  of  \cite{haeberlyjackowskimay},  the  right  vertical map  is  induced  by $f$. Due  to   proposition \ref{proposition primeidealssegal}, the  prime  ideals containing  $J$ and  $I_{\mathcal{F}\cap H, h, \pt}$  agree  and  the  right  vertical  map  is an  isomorphism.

\end{proof}

\begin{corollary}\label{corollary p completion}
 Let  $p$  be  a prime  number. For  any  group  satisfying   conditions \ref{condition m}   for  which  the  maximal  finite  subgroups  are  finite  $p$-groups, 
 the  groups
$\pi_{G}^{0}(\eub{G})\otimes \mathbb{Z}_{\hat{p}}$ and  $\pi^{n}_{G}(\eub{G})_{\hat{\mathbb{I}}_{G,\mathcal{MAX}}}$  are  isomorphic.
\end{corollary}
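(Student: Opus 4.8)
The plan is to reduce the statement to its finite-group shadow and to deduce it from the coincidence of the $p$-adic and $\mathbb{I}_{G,\mathcal{MAX}}$-adic topologies on $A(G)=\pi_G^0(\eub{G})$. Since $\eub{G}$ is assumed to admit a finite model, there are only finitely many conjugacy classes of maximal finite subgroups, each of bounded order, and by Condition \ref{condition m} each such $M$ is a finite $p$-group with $N_G(M)=M$. The edge homomorphism $e\colon A(G)\to A^{\lim}(G)=\lim_{H\in\calfin}A(H)$ has nilradical kernel and nilpotent cokernel, and its target is a finitely generated $\IZ$-algebra; consequently $A(G)\otimes\IZ_{\hat{p}}$ is canonically the $p$-adic completion of $A(G)$, and it suffices to prove that the $\mathbb{I}_{G,\mathcal{MAX}}$-adic and $p$-adic filtrations are cofinal in one another. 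The degree-$n$ statement then follows from the degree-wise, module-compatible completion isomorphism furnished by Theorem \ref{theorem segalconjecture}.

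The finite input is part (v) of Theorem \ref{construction bartschelement}: for each maximal $M\in\mathcal{MAX}$ the augmentation-ideal-adic topology on $A(M)$ agrees with the $p$-adic one, so that $A(M)\otimes\IZ_{\hat{p}}\cong A(M)_{\hat{I_M}}$. This is precisely the desired isomorphism for the finite subgroups, and it is what I would propagate to $G$.

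The propagation proceeds through the prime-ideal dictionary. The edge-homomorphism lemma identifies $\operatorname{Spec}A(G)$ with $\operatorname{Spec}A^{\lim}(G)$, and Proposition \ref{proposition primeidealssegal} pins down which primes of $\pi_H^0(\pt)$ contain the family augmentation ideals: a prime $\mathcal{P}$ contains $I_{\mathcal{F}\cap H,\pt}$ exactly when it contains the image of the structure map, which by the finite $p$-group analysis forces $\mathcal{P}$ to lie over the prime $p$; conversely, the inclusion of the dimension-augmentation into $\mathcal{P}(e,p)$ shows every prime over $p$ contains $\mathbb{I}_{G,\mathcal{MAX}}$. Hence the vanishing locus $V(\mathbb{I}_{G,\mathcal{MAX}})$ in $\operatorname{Spec}A(G)$ coincides with $V(p)$, so $\mathbb{I}_{G,\mathcal{MAX}}$ and $(p)$ have equal radical. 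Because $A(G)$ is Noetherian modulo its nilradical, equal radicals yield mutually cofinal adic filtrations and therefore canonically isomorphic completions, giving $A(G)_{\hat{\mathbb{I}}_{G,\mathcal{MAX}}}\cong A(G)_{\hat{p}}\cong A(G)\otimes\IZ_{\hat{p}}$. To make the $\mathbb{I}_{G,\mathcal{MAX}}$-adic completion computable I would invoke Theorem \ref{theorem segalconjecture}, which identifies it with $\pi_G^n(E_{\calfin}(G)\times\eub{G})$ and guarantees that the relevant inverse systems are Mittag--Leffler with $\lim^{1}=0$, reassembling the finite contributions over the self-normalizing maximal subgroups.

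The main obstacle will be the bookkeeping forced by the nilpotent kernel and cokernel of the edge homomorphism: equality of radicals controls the topologies only up to nilpotents, so I must verify that these discrepancies are annihilated in the inverse limits defining the two completions rather than merely on associated graded pieces. Here I would use that $\ker e$ equals the nilradical, so a fixed power of it vanishes, together with the vanishing of $\lim^{1}$ from Theorem \ref{theorem segalconjecture}; these ensure that the $\mathbb{I}_{G,\mathcal{MAX}}$-adic and $p$-adic completions agree as pro-objects, and hence as their limits. The self-normalizing hypothesis $N_G(M)=M$ is what keeps the assembly over $\mathcal{MAX}$ rigid enough for the per-$M$ isomorphisms of the second paragraph to glue without correction terms.
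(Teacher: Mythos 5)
Your overall strategy --- reduce to finite $p$-subgroups, feed in Laitinen's comparison of adic topologies (part (v) of Theorem \ref{construction bartschelement}), and control everything through the prime-ideal analysis of Proposition \ref{proposition primeidealssegal} and the pro-isomorphisms of Theorem \ref{theorem segalconjecture} --- is the same as the paper's. The assembly is not: the paper never forms a global statement about $\operatorname{Spec} A(G)$. It proves the pro-isomorphism $\{\pi^m_G(G/H)/p^n\}\to\{\pi^m_G(G/H\times (E\mathcal{MAX})^{n-1})\}$ one orbit at a time and then inducts over the skeleta of a finite model for $\eub{G}$ using Mayer--Vietoris, so that the Mittag--Leffler condition and the vanishing of $\lim^1$ do the work of passing to the limit.

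The genuine gap is your pivotal commutative-algebra step: the claim that $V(\mathbb{I}_{G,\mathcal{MAX}})=V(p)$ in $\operatorname{Spec}A(G)$, hence that the two adic filtrations are mutually cofinal. The cited propositions give only one half of this, and the other half is false. The characteristic-zero prime $\mathcal{P}(\{e\},0)=\ker\varphi_{e}$ (and every $\mathcal{P}(\{e\},q)$ with $q\neq p$ lying over it) contains the image of every family augmentation ideal --- the paper's own proposition explicitly allows the case ``$p=0$'' in its dichotomy --- but it does not contain $p$, since $\varphi_{e}(p^{N})=p^{N}\neq 0$. Consequently no power of $p$ lies in $\mathbb{I}_{G,\mathcal{MAX}}$, the $\mathbb{I}$-adic topology is strictly coarser than the $p$-adic one, and ``equal radicals $\Rightarrow$ cofinal filtrations'' never gets off the ground. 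This is not a bookkeeping issue to be absorbed by nilpotents: already for a finite $p$-group $H$ the $I$-adic completion of $A(H)$ is $\mathbb{Z}\times(\text{a pro-}p\text{ group})$, which is not $A(H)\otimes\mathbb{Z}_{\hat{p}}$; Laitinen's result compares the two topologies on the augmentation ideal $I(H)$ itself (equivalently, after splitting off the trivial summand), not on all of $A(H)$. The paper's cell-wise argument routes both completions through the geometric one, $\pi^{m}_{G}(-\times (E\mathcal{MAX})^{n-1})$, where the offending trivial-family contribution is killed, rather than comparing the ideals $(p)$ and $\mathbb{I}_{G,\mathcal{MAX}}$ directly inside $A(G)$. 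If you want to keep your global formulation you must work with the reduced/augmentation part of the ring throughout and re-establish cofinality there; as written, the step from radical equality to isomorphic completions is where the proof fails.
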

\begin{proof}
The  morphism  of  pro-groups $\bigl \{\pi_{G}^{m}(X)/ p^{n} \pi_{G}^{n}(X) \bigr \} \to  \bigl \{ \pi_{G}^{m}(X\times  E\mathcal{MAX})^{n-1})\bigr \}$ is  proved   to  be  an  isomorphism  for $X= G/H$ with  $H$  a  p-group. 
The  prime  ideals  in $\pi_{H}^{0}(\pt)$  containing  $I_{\mathcal{MAX}\cap H, H, \pt }$  and the one    generated  by  the image  of $I_{\mathcal{MAX}, G, G/H}$ under  ${\rm  ind}\circ f^{*}$ agree  by  the  previous  argument. Because  $H$  is a  $p$-group, these  agree  with the ones  containing $I_{\mathcal{TR}, G, G/H}$  for  the  trivial  family.  Due  to  part 5 of  theorem \ref{construction bartschelement},  these  agree with the  ones  containing $p$. 

Since  both functors  have  Mayer-Vietoris  sequences, the  result  follows  by  induction  on the  dimension  of  $X$. 

\end{proof}

\begin{proposition}\label{proposition bartsch power}
 Let  $G$  be  a  discrete  group satisfying  conditions  \ref{condition m}. 
 
There  exists  a   `` Generalized  Bartsch  element '' $w \in \pi^{0}_{G}(\eub G)$   for  which  the  map   $\pi_{G}^{0}(\eub{G})\longrightarrow H^{0}_{\mathbb{Z}({\rm Or}(G)}(\eub{G}, \pi^{0}_{?}(\pt)= \lim_{K\in{\rm Sub(G)}}\pi_{K}^{0}(\pt)   \overset{\psi_{M} }{\longrightarrow} \pi_{M}^{0}(\pt)$ given  by  the  composition  of  the  edge  homomorphism  and  the  structural  map  for  the   inverse limit  maps  $w$  to  a power of  the  element 
  constructed  in  \ref{construction bartschelement}  for  any  maximal  subgroup $M$.

\end{proposition}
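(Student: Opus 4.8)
The plan is to build the class first in Bredon cohomology, namely as an element $\bar w$ of the inverse limit $A^{\lim}(G)=\lim_{K\in\calfin}\pi^0_K(\pt)=H^0_{\mathbb Z\Or(G)}(\eub G;\pi^0_?(\pt))$ whose component at each maximal subgroup is the finite Bartsch element, and then to pull a power of $\bar w$ back to $\pi^0_G(\eub G)$ along the edge homomorphism via Proposition \ref{proposition spectral sequence}. The starting observation is a vanishing property of the Bartsch element $x_M\in A(M)$ of Theorem \ref{construction bartschelement}: since $x_M=\prod_{[K]}u_K$ with $u_K=[M/K]-|(M/K)^K|\,[M/M]$ taken over conjugacy classes of proper subgroups $K\subsetneq M$, and $\varphi_L(u_L)=0$, every mark $\varphi_L(x_M)$ with $L\subsetneq M$ vanishes, in particular at $L=\{1\}$. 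As the total mark map $A(H)\hookrightarrow\prod_L\mathbb Z$ is injective and compatible with restriction, this gives $\res^M_H(x_M)=0$ for every proper subgroup $H\subsetneq M$ (the trivial subgroup included). This is the property that will make the gluing below consistent.

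Next I would set $a_M=x_M$ for each maximal finite subgroup $M$ and $a_K=0$ for every other finite subgroup $K$, and verify that $(a_K)_K$ is compatible with all morphisms of the orbit category $\OrGF{\calfin}{G}$, hence defines a class $\bar w\in\lim_K\pi^0_K(\pt)$ with $\psi_M(\bar w)=x_M$. Restriction along an inclusion $K\subsetneq M$ carries $x_M$ to $0=a_K$ by the vanishing above; restrictions between non-maximal subgroups carry $0$ to $0$; and conjugations carry $x_{{}^gM}$ to $x_M$ by naturality of the Bartsch construction under group isomorphisms. Condition \ref{condition m} is exactly what disposes of the remaining morphisms: uniqueness of the maximal $p$-group containing a nontrivial finite subgroup forces any subconjugacy relation $M\lesssim L$ with $L$ finite to make $L$ conjugate to $M$, so two non-conjugate maximal subgroups interact only through the trivial subgroup, where both components are $0$; and $N_G(M)=M$ makes the Weyl group $N_G(M)/M$ trivial, removing any invariance obstruction to placing $x_M$ at $M$.

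Finally I would lift. The class $\bar w$ sits in the entry $E_2^{0,0}=H^0_{\mathbb Z\Or(G)}(\eub G;\pi^0_?(\pt))$ of the Atiyah--Hirzebruch spectral sequence for $\pi^*_G$ on the finite-dimensional complex $\eub G$. Using the rational collapse recorded just before Proposition \ref{proposition spectral sequence}, that proposition produces an integer $k$ with $\bar w^{\,k}$ in the image of the edge homomorphism; choosing any preimage $w\in\pi^0_G(\eub G)$ then gives $\psi_M(\mathrm{edge}(w))=\psi_M(\bar w^{\,k})=(\psi_M(\bar w))^{k}=x_M^{\,k}$ for every maximal $M$, because $\psi_M$ is a ring homomorphism, so $w$ is the desired generalized Bartsch element. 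I expect the main obstacle to be the compatibility check of the middle paragraph: confirming that $(a_K)$ is genuinely a point of the inverse limit, which rests on combining the restriction-to-zero of $x_M$ with \emph{both} clauses of condition \ref{condition m}. Once that is secured, the spectral-sequence lift is routine.
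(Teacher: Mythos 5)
Your proposal follows essentially the same route as the paper: assemble the finite Bartsch elements $x_M$ at the maximal subgroups into a class of $\lim_K \pi^0_K(\pt)$ and lift a power of it through the edge homomorphism via Proposition \ref{proposition spectral sequence}. The only difference is that you spell out the compatibility of the family $(a_K)$ over the orbit category (using the vanishing of the marks of $x_M$ at proper subgroups together with both clauses of Condition \ref{condition m}), a verification the paper leaves implicit when it simply writes $x=\{x_{M_i}\}\in\lim_H\pi^0_H(\pt)$.
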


\begin{proof}

Let  $X_{M_{i}}\in \pi_{M_{i}}^{0}(\pt)$  be  the Bartsch  element  constructed  in  Theorem \ref{construction  bartschelement},  part 4. Put $x=\{ x_{M_{i}} \} \in \lim_{H} \pi_{H}^{0}(\pt)$. Choose    an  element $w$ and  a  power  $k$ such  that $w$   is  mapped  to   $x^k$ under  the  edge  homomorphism.  
\end{proof}

 \section{End  of  proof }

\begin{definition}\label{definition adhoccohomotopy}
Let  $\hat{X}$  be a  proper  and  paracompact $G$-ANR, which is  contractible  after  forgetting  the  group  action. Assume  that  there  is  a   map $X \to \hat{X}$ from  a   proper  $G$-CW  complex   of  finite  type $X=  \cup X_ {n}$ inducing a  weak  $G$-homotopy  equivalence (a  map  restricting  to   weak homotopy  equivalences  $\hat{X}^{H}\to X^ {H}$  for  all  subgroups $H$). 
Define 
  
$$\hat{\pi}_{G}^{*}(\hat{X})=  \lim_{n}\pi^{*}_{G}(X_{n})\otimes  \mathbb{Q}_{\hat{p}}$$ 
\end{definition}

\begin{proposition} \label{proposition powers}
Let  $G$ be  a discrete  group  satisfying  \ref{condition m}. Let $X$  be  a  paracompact  proper  $G$-ANR , which is  contractible  after  forgetting  the  group  action. Assume  that  there  is  a   map $X \to \hat{X}$ from  a   proper  $G$-CW  complex   of  finite  type $X=  \cup X_ {n}$ inducing a  weak  $G$-homotopy  equivalence.

The    maps  $X_{n} \to \eub{G}$  together with the  $G$-homotopy  equivalence $\eub{G}\to  J_{\mathcal{FIN}}(G) $ induce    isomorphisms

$$\hat{\pi}_{G}^{0}(J_{\mathcal{FIN}}(G))\to  \hat{\pi}_{G}^{0}(\eub{G})\overset{\cong}{\longrightarrow}\lim \pi_{G}^{0}(X_{n})$$ 

\end{proposition}

\begin{proof}
The  point  is  the  existence  of long  exact  sequences  for  the   functor $\hat{\pi}_{G}^{*}(X, A )$,  which  is  guaranteed  by  the natural  equivalence  with  the  Equivariant  Cohomology Theory   defined  by  $ (X,A) \mapsto \pi_{G}^{m}((\EGF{G}{\mathcal{MAX}}, \emptyset) \times ( X,A))$ on  finite  $G$-CW  pairs.

 \end{proof}

\begin{proposition} \label{proposition nonilpotents}
 Let  $G$ be  a  group  satisfying   conditions  \ref{condition m}. Let  $\hat{X}$  be a  proper $G$-ANR as  in  \ref{definition adhoccohomotopy}.  Then, there  exists  an  element  $w\in \pi_{G}^{0}(\eub{G})\otimes \mathbb{Q} $  such  that
\begin{itemize}

\item {$w\in \ker \pi_{G}^{0}(\eub{G})\otimes \mathbb{Q}  \to \pi_{G}^{0}(G/H)\otimes \mathbb{Q} $  for  all  finite  $H$. }
\item{$ w\in \ker \pi_{G}^{0}(\eub{G}) \otimes \mathbb{Q} \to   \pi_{G}^{0}(X_{0})\otimes \mathbb{Q}$. }
\item{For  every  $k>0$  there  exists  an $n>0$  such  that   the  image  if  $w^{k}$ under  $\hat{\pi}_{G}^{0}(\eub{G})\to \pi_{G}^{0}(X_{n})\otimes \mathbb{Q}_{\hat{p}} $ is  not  zero. }
\end{itemize}
  
\end{proposition}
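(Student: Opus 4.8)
The plan is to take for $w$ the image in $\pi_G^0(\eub{G})\otimes\mathbb{Q}$ of the Generalized Bartsch element furnished by the preceding proposition, so that under the edge homomorphism followed by the structure map $\psi_M$ of the inverse limit one has $\psi_M(\operatorname{edge}(w))=x_M^{k_M}$, a power of the Bartsch element $x_M\in A(M)$ of Theorem \ref{construction bartschelement} for every maximal $p$-group $M$. I would then verify the three bullets in turn, the last being where the genuine content lies; throughout, the rational collapse of the Atiyah--Hirzebruch spectral sequence (Proposition \ref{proposition spectral sequence}) is what keeps suitable powers of $w$ inside the image of the edge homomorphism.

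First I would dispose of the two vanishing statements. Because the restriction of $w$ to an orbit $G/H$ agrees with the $H$-component of $\operatorname{edge}(w)$, the edge homomorphism for a single orbit being an isomorphism onto $\pi_H^0(\pt)$, the maximality condition \ref{condition m} lets me reduce to the unique maximal $p$-group $M\supseteq H$: by compatibility in the inverse limit the $H$-component is $\res^M_H(x_M^{k_M})$. Since $x_M$ has vanishing marks $\varphi_L(x_M)=0$ at every proper $L<M$ (Theorem \ref{construction bartschelement}) and lies in the augmentation ideal $I_M$, its restriction to any proper $H<M$, and to the free orbit $H=\{1\}$, has all marks zero and is therefore rationally trivial. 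This gives the first bullet on every orbit with proper isotropy; the nonzero content is concentrated on the maximal orbits $G/M$, where $w$ reduces to $x_M^{k_M}$, and it is exactly this that will drive the third bullet. The second bullet then follows because $X_0$ is a finite proper $G$-CW complex whose cells carry such proper isotropy, so rational cohomotopy is detected orbitwise and $w$ dies on each cell, hence on $X_0$.

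The heart of the matter, and the main obstacle, is the third bullet, namely the non-nilpotency of $w$ as seen on the exhausting sequence. By Proposition \ref{proposition powers} the groups $\hat\pi_G^0(\eub{G})$ and $\lim_n \pi_G^0(X_n)\otimes\mathbb{Q}_{\hat p}$ are identified, so it suffices to show $w^k\neq 0$ in $\hat\pi_G^0(\eub{G})$ for every $k>0$; a nonzero element of the limit has nonzero image in some $\pi_G^0(X_n)\otimes\mathbb{Q}_{\hat p}$, which supplies the required $n$. To detect $w^k$ I would pass to the completion: Corollary \ref{corollary p completion} identifies the $p$-completed theory with the completion at the ideal $\mathbb{I}_{G,\mathcal{MAX}}$, and the Segal Conjecture for families, Theorem \ref{theorem segalconjecture}, expresses this completion through the Burnside rings $A(M)$ of the maximal subgroups via the structure maps. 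Under these maps $w^k\mapsto x_M^{k k_M}$, and here the two finite-group facts of Theorem \ref{construction bartschelement} enter decisively: $x_M$ has a nonzero mark at the top, so it is non-nilpotent in $A(M)$, and for the $p$-group $M$ the completion map $A(M)\to A(M)^{\wedge}_{I_M}$ is injective with the $I_M$-adic and $p$-adic topologies coinciding. Hence no power of $x_M$ is annihilated by $p$-completion, $w^k$ survives in $\hat\pi_G^0(\eub{G})$, and detection on some $X_n$ follows.

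The delicate point to get right is the bookkeeping between the three manoeuvres --- the edge homomorphism on the infinite-dimensional $\eub{G}$, the $p$-adic completion, and the passage to the finite skeleta $X_n$ --- since the completion theorem is naturally a statement about pro-groups and Mittag-Leffler systems rather than about a single group. I would therefore run the argument at the level of the pro-system $\{\pi_G^0((\eub{G}\times X)^n)\}$ of Theorem \ref{theorem segalconjecture}, track the image of $w^k$ componentwise through the structure maps $\psi_M$ using the prime-ideal analysis that underlies Corollary \ref{corollary p completion}, and only at the very end read off non-triviality on a single finite $X_n$, thereby converting the pro-group statement into the pointwise non-vanishing demanded by the third bullet.
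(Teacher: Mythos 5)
Your treatment of the third bullet is essentially the paper's: take the generalized Bartsch element, pass through the identifications of Proposition \ref{proposition powers} and Corollary \ref{corollary p completion} (i.e.\ the Segal conjecture for families), and detect non-nilpotency in the Burnside rings $A(M)$ of the maximal $p$-groups, where the nonzero top mark of $x_M$ and the injectivity of the $I$-adic ($=p$-adic) completion for $p$-groups do the work. The paper compresses this into ``the rationalized Burnside ring has no nilpotents and the vertical completion maps are isomorphisms,'' but the chain of identifications and the finite-group input from Theorem \ref{construction bartschelement} are the same as yours.

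The divergence, and the genuine gap, is in the first two bullets. The paper does not take $w$ to be the generalized Bartsch element $v$ itself: it sets $w=v^{m}$ with $m=G\text{-}cat(X_{0})$. That power is not decoration; it is the Bartsch--Clapp--Puppe device for killing $w$ on $X_{0}$ (second bullet): $X_{0}$ is covered by categorical pieces each factoring through an orbit, $v$ dies under restriction to each relevant orbit, and a cup product of $m$ classes, each vanishing on one piece of the cover, vanishes on all of $X_{0}$. You drop this power and instead assert that rational degree-zero cohomotopy of the finite complex $X_{0}$ is ``detected orbitwise.'' Even granting that claim (it requires the rational injectivity of the edge homomorphism for $X_{0}$, which you do not justify), your argument for the second bullet presupposes the first bullet on \emph{every} orbit of $X_{0}$ --- and you have just conceded, correctly, that the restriction of $w$ to $G/M$ for $M$ maximal is $x_{M}^{k_{M}}$, whose mark at $M$ is nonzero, hence rationally nontrivial. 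So as written you establish the first bullet only for $H$ properly contained in a maximal subgroup, and the second bullet not at all when $X_{0}$ carries maximal isotropy; this is strictly weaker than the proposition. (The tension you noticed on maximal orbits is real --- the paper's own proof is silent on bullets one and two --- but flagging it is not the same as closing it: you need either the categorical power $w=v^{m}$ or some other mechanism controlling the maximal orbits appearing in $X_{0}$ before the proposition as stated is proved.)
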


\begin{proof}
Let  $v \in \pi_{G}^{0}(\eub{G})\otimes \mathbb{Q}\cong \Pi_{H\in \mathcal{MAX}}A(H)\otimes \mathbb{Q}$ be  the  image  of  the  element  constructed  in proposition  \ref{proposition  bartsch power}  under  the  rationalized  edge  homomorphism. 

Let  $m$ $=G$-$cat(X_{0})$  and  put  $w=v^{m}$. As  in  \cite{mountainpassthmclapp},  the  following  diagram  commutes:

$$\xymatrix{ \pi_{G}^{0}(\eub{G})\otimes \mathbb{Q}_{\hat{p}} \ar[d] \ar[rr] &  & \lim_{n} \pi_{G}^{0}(X_{n})\otimes \mathbb{Q}_{\hat{p}} \ar[d] \\  \pi_{G}^{0}(\eub{G})_{\hat{\mathbb{I}}_{G,\mathcal{MAX}}}\otimes \mathbb{Q}  \ar[r]^-{\cong} &  \hat{\pi}_{G}^{0}(\eub{G})\ar[r]^-{\cong}& \hat{\pi}_{G}^{0}(X) }$$

as  the  left  and  right vertical  maps  are isomorphisms,  and  there  are  no  nilpotent  elements  in  the rationalized Burnside  Ring $\pi_{G}^{0}(\eub{G})\otimes \mathbb{Q}$,   there  are  no  nilpotent elements  in  $\pi^{0}_{G}(\eub{G})\otimes \mathbb{Q}_{\hat{p}}$,  and  so  there  exists  a natural  number   $n$  such  that   the  third  condition  holds. 

\end{proof}

Let $\hat{E}\subset  E $  be  a  $G$-invariant  linear  subspace  with  a  finite  dimensional, $G$-invariant complement $F_{0}$  satisfying  the  mountain  pass  condition 1  in \ref{theorem mountainpass} . For any  finite  dimensional subspace  $\hat{F}$, the  sum  $F=F_{0}\oplus \hat{F}$ satisfies  
$$F-B_{r}(F)\subset \phi^{a}$$

\begin{lemma}
 There  is  a  $G$-map  $f$  such  that   the  diagram  
$$\xymatrix{ (F,F-B_{r}(F)) \ar[d]^{i_{F}} \ar[r] & (E-\{0\},\phi^{a}) \ar[d]^{f} \\ (F,F-S(F_{0}\oplus F)\ar[r]^{j_{F} } & (E-\{0\}, S(\hat{E}))} $$
\end{lemma}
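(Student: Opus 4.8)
The plan is to construct one $G$-map $f\colon(E-\{0\},\phi^{a})\to(E-\{0\},S(\hat{E}))$ carrying the mountain pass geometry of the first hypothesis of Theorem~\ref{theorem mountainpass}, and then, for each finite dimensional $F=F_{0}\oplus\hat{F}$, to exhibit the left inclusion $i_{F}$ and the finite dimensional comparison map $j_{F}$ so that the square commutes up to $G$-homotopy; this adapts the construction of Bartsch, Clapp and Puppe \cite{mountainpassthmclapp} to the proper setting. Throughout I would fix a $G$-invariant norm on $E$ (available by \cite{palais}, the action being proper outside $0$) and the $G$-invariant topological splitting $E=\hat{E}\oplus F_{0}$, which exists because both summands are $G$-invariant with $F_{0}$ finite dimensional; with these choices every projection, radial map and normalization used below is automatically $G$-equivariant.

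First I would locate the target sphere. The mountain pass hypothesis writes $\hat{E}\cap\phi^{a}$ as a disjoint union $B\sqcup U$ of closed sets with $B$ bounded and containing $0$ and $U$ unbounded; using this separation I would choose a radius $R$ for which $B$ lies inside $B_{R}(\hat{E})$ and the sphere $S(\hat{E}):=S_{R}(\hat{E})$ sits in the mountain region $\{\phi>a\}\cap\hat{E}$, separating the two ends. This $S(\hat{E})$ is the subspace appearing in the target pair.

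Next I would define $f$ as a composite of a $G$-equivariant deformation and a radial collapse. Using the invariant projection $P\colon E\to\hat{E}$ and the invariant norm, the deformation pushes $\phi^{a}$ off the separating neck, moving the points lying on the $B$-side outward and those on the $U$-side inward until their $\hat{E}$-components have norm near $R$; the collapse then sends the result radially in $\hat{E}$ onto $S(\hat{E})$. Granting the image control discussed below, $f$ is a $G$-map of pairs with $f(\phi^{a})\subset S(\hat{E})$. For the square I would take $i_{F}$ to be the inclusion $(F,F-B_{r}(F))\hookrightarrow(F,F-S(F_{0}\oplus F))$ (legitimate once $r$ is large enough that $S(F_{0}\oplus F)\subset B_{r}(F)$) and define $j_{F}$ as the finite dimensional model of $f$ obtained from the same projection and radial data restricted to $F$. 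Commutativity then reduces to a statement on the finite dimensional space $F$: both composites $f\circ(\text{top map})$ and $j_{F}\circ i_{F}$ land in $S(\hat{E})$ and are built from the same data, differing only by a radial parameter, so a straight-line homotopy followed by renormalization — which never meets antipodes and is $G$-equivariant since all the maps are — closes the square up to $G$-homotopy.

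The step I expect to be the main obstacle is the image control $f(\phi^{a})\subset S(\hat{E})$. The mountain pass hypothesis only describes $\phi^{a}$ inside $\hat{E}$, whereas $f$ must be defined and land correctly on all of $E-\{0\}$; globalizing the separation of $B$ from $U$ out of the finite codimensional $\hat{E}$ to the whole of $E$ — so that every point of $\phi^{a}$, and not merely those in $\hat{E}$, is carried to the correct side of $S(\hat{E})$ — is delicate, the more so since in infinite dimensions two disjoint closed sets need not be separated by a metric sphere. Here I would use the hypothesis $F-B_{r}(F)\subset\phi^{a}$ to anchor the behaviour along the finite dimensional directions and the properness of the action to keep the whole deformation away from the fixed point $0$, so that no zeros are created. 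Once this equivariant, continuous deformation is in place, the reduction of the square's commutativity to the finite dimensional straight-line homotopy above is routine.
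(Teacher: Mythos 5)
There is a genuine gap, and you have in fact pointed at it yourself: your entire construction of $f$ rests on first producing a global equivariant deformation that pushes all of $\phi^{a}$ (not just $\hat{E}\cap\phi^{a}$) to the two sides of a metric sphere $S(\hat{E})$, and then collapsing radially. You correctly observe that the mountain pass hypothesis only describes $\phi^{a}$ inside the finite codimensional subspace $\hat{E}$, and that in infinite dimensions two disjoint closed sets need not be separated by any sphere; but you then leave this exactly where it stands, offering only the remark that you would ``anchor'' the behaviour using $F-B_{r}(F)\subset\phi^{a}$ and properness of the action. That is not an argument: the anchoring controls $f$ only on finite dimensional slices, while the required containment $f(\phi^{a})\subset S(\hat{E})$ is a statement about the whole of $\phi^{a}$. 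The radial collapse also degenerates on points of $\phi^{a}$ whose $\hat{E}$-component vanishes, and nothing in your setup excludes such points.

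The missing idea, which is how the paper closes the lemma (following Lemma 5.2 of Clapp--Puppe), is to abandon any attempt at a global geometric separation and instead use an \emph{equivariant extension theorem}. One prescribes $f$ only on the closed subsets where the hypothesis dictates its values: the bounded closed component of $\hat{E}\cap\phi^{a}$ (the one containing $0$) is sent to $0$, and the other closed component is sent into $\hat{E}-B_{r}(\hat{E})$. Since $\hat{E}$ is a proper $G$-absolute retract (Theorem 3.9 of the Antonyan et al.\ reference \cite{sergeyleonardo}), this partially defined $G$-map extends to a $G$-map on all of $E$; the commutativity of the square with the inclusions $i_{F}$, $j_{F}$ is then read off from the prescription on $\hat{E}\cap\phi^{a}$ together with $F-B_{r}(F)\subset\phi^{a}$. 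In other words, the absolute retract property does the work that you are asking a (nonexistent in general) separating sphere to do. Without invoking such an extension result, your construction of $f$ does not get off the ground.
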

commutes,  where  $i_{F}$ and  $j_{F}$  are  given  by  inclusions. 

\begin{proof}

Compare  lemma  5.2  in  \cite{clapppuppe}.  Define  a  map  $f:E\to \hat{E}$  by  sending   the   bounded  closed  subspace  $A$ in theorem \ref{theorem mountainpass} to  $0$,  mapping   $\hat{E}\cap \phi^{a}$ into  $\hat{E}-B_{r}(\hat{E})$ and  extending   to  all  of  $E$ , 
since  $\hat{E}$  is a  proper, $G$- absolute  retract,  Theorem  3.9 in  page  1953  of  \cite{sergeyleonardo}.
\end{proof}
   
The  same  argument  as  in   Proposition  5.3, \cite{clapppuppe},  page  17  yields: 

\begin{proposition}\label{proposition borsukulam}
  
 For  any  Equivariant  Cohomology  Theory, $\mathcal{H}_{G}^{*}$,  
$$G{\rm -}cat(E, \phi^{a})\geq \mathcal{H}^{*}_{G} {\rm lenght} \;(S(F_{0}\oplus  F) \to S(F_{0}\oplus  \hat{E}, S(F_{0}))$$
\end{proposition}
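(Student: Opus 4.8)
The plan is to follow the cohomological lower bound for equivariant category established by Clapp and Puppe in Proposition 5.3 of \cite{clapppuppe}, transporting it through the commutative square of the preceding Lemma into the proper setting. The inequality is of Borsuk--Ulam type: a covering realizing $G\text{-}cat(E,\phi^{a})$ produces a bounded family of orbits over which a suitable product of cohomology classes is forced to vanish, while the $\mathcal{H}^{*}_{G}$-length is by definition the least number of factors at which such forced vanishing occurs.

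First I would take a covering realizing the category. Write $n=G\text{-}cat(E,\phi^{a})$ and choose open $G$-invariant sets $X_{0},X_{1},\dots,X_{n}$ covering $E$, where $X_{0}\supset\phi^{a}$ admits a $G$-homotopy into $\phi^{a}$ and each $X_{i}$ with $i\ge 1$ carries a $G$-map to a single orbit $G/H_{i}$. These $n$ orbits are the candidates that I claim witness $\mathcal{H}^{*}_{G}\text{-length}\le n$ for the sphere pair map. The map $f$ of the preceding Lemma, being defined on all of $E$ and fitting into the commutative square with the inclusions $i_{F}$ and $j_{F}$, is what lets me pull cohomology classes of the relevant sphere pair $(S(F_{0}\oplus\hat{E}),S(F_{0}))$ back to $(E-\{0\},\phi^{a})$.

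The core step is the product-vanishing argument. For each $i\ge 1$ let $\omega_{i}$ lie in the kernel of the restriction $\mathcal{H}^{*}_{G}(S(F_{0}\oplus\hat{E}),S(F_{0}))\to\mathcal{H}^{*}_{G}(G/H_{i})$; since $X_{i}$ factors through $G/H_{i}$, the pullback $f^{*}\omega_{i}$ restricts to zero on $X_{i}$, hence lifts to a relative class in $\mathcal{H}^{*}_{G}(E,X_{i})$. The relative class $\gamma$ is absorbed by $X_{0}$, because $X_{0}$ deforms into $\phi^{a}$, where a relative class of the pair $(E,\phi^{a})$ is supported. A standard relative cup-product computation over the covering $\{X_{0},\dots,X_{n}\}$, exactly as in \cite{clapppuppe}, then shows that the product $\gamma\,\omega_{1}\cdots\omega_{n}$ pulls back to zero in $\mathcal{H}^{*}_{G}(E,\phi^{a})$; by commutativity of the square of the preceding Lemma and naturality of the cup product, the product already vanishes at the level of the sphere pair. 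This is precisely the assertion that the $\mathcal{H}^{*}_{G}$-length of the sphere pair map is at most $n$, which gives the claimed inequality. Throughout I would invoke the subadditivity, deformation monotonicity and continuity of the proper equivariant category, established in the Proposition on its formal properties above.

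The hardest part will be the relative cup-product bookkeeping in the proper, infinite-group setting. One must check that the multiplicative structure on $\mathcal{H}^{*}_{G}$, as extended from finite proper $G$-CW complexes to the relevant pairs, is compatible with the restriction-to-orbit maps defining the length, and that lifting each $f^{*}\omega_{i}$ to a class relative to $X_{i}$ and multiplying these relative classes over the covering is legitimate; this is where the closed-cofibration and neighborhood $G$-deformation-retract properties of proper $G$-CW pairs, recalled earlier, are needed. Once these compatibilities are in place, the combinatorics of the covering and the naturality of the square reduce the statement to the verbatim argument of \cite{clapppuppe}, Proposition 5.3.
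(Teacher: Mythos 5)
Your proposal is correct and follows essentially the same route as the paper: the paper's entire proof is the remark that ``the same argument as in Proposition 5.3 of Clapp--Puppe yields'' the inequality, and what you have written out --- taking a covering realizing the category, pulling classes back through the commutative square of the preceding Lemma, lifting each $f^{*}\omega_{i}$ to a class relative to $X_{i}$, and running the relative cup-product vanishing argument --- is precisely the Clapp--Puppe argument being invoked. You have simply supplied the details the paper leaves implicit, including the correct caveat that the multiplicative structure and the cofibration properties of proper $G$-CW pairs are what make the bookkeeping legitimate in the proper setting.
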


We  now  finish  the  proof  of  Theorem  \ref{theorem mountainpass}. This  follows  the  proof  of  proposition  3.2  in   
\cite{mountainpassthmclapp}. 

\begin{proposition}
$$G{\rm -}cat (E, \phi^{a})=\infty$$  
\end{proposition}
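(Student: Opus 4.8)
The plan is to deduce the infinitude of $G-cat(E,\phi^{a})$ from the lower bound of Proposition \ref{proposition borsukulam} by showing that the relevant equivariant cohomotopy length is unbounded, the unboundedness being witnessed by the powers of the generalized Bartsch element $w$ produced in Proposition \ref{proposition nonilpotents}.

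First I would fix the exhausting system. Choosing an increasing sequence $\hat{F}_{0}\subset \hat{F}_{1}\subset \cdots$ of finite dimensional $G$-invariant subspaces with dense union, I set $X_{n}=S(F_{0}\oplus \hat{F}_{n})$. Because the action is proper outside $0$, each $X_{n}$ is a finite proper $G$-CW complex, while the union $X=\bigcup_{n}X_{n}$ is the unit sphere of an infinite dimensional space, hence contractible after forgetting the action and $\mathcal{FIN}$-numerable. Thus $X$ is precisely a space of the type required in Definition \ref{definition adhoccohomotopy}, so Propositions \ref{proposition powers} and \ref{proposition nonilpotents} apply to this system and furnish the element $w\in \pi_{G}^{0}(\eub{G})\otimes \mathbb{Q}$.

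Next I would translate the three properties of $w$ into a statement about length. The first property says that the image of $w$ in $\pi_{G}^{0}(G/H)$ vanishes for every finite $H$; by naturality along the classifying maps $X_{n}\to \eub{G}$, the image of $w$ in $\pi_{G}^{0}(X_{n})$ therefore lies in every kernel $\ker(\pi_{G}^{0}(\eub{G})\to \pi_{G}^{0}(G/H))$ occurring in the definition of the Universal Cohomology Length, so each factor $w$ is an admissible class $\omega_{i}$. The second property guarantees that $w$ restricts trivially to $X_{0}=S(F_{0})$, matching the relative subspace $S(F_{0})$ of Proposition \ref{proposition borsukulam} and letting $w$ serve as the class $\gamma$ on the bounded piece containing $0$. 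The third property --- non-nilpotency of $w$ after $p$-completion --- says that for every $k$ there is an $n$ with $w^{k}\neq 0$ in $\pi_{G}^{0}(X_{n})\otimes \mathbb{Q}_{\hat{p}}$. Since $w$ lies in \emph{every} admissible kernel, this nonzero $k$-fold product survives for any choice of $k$ orbits, so by the definition of the length
$$\pi_{G}^{*}\text{-length}\bigl(S(F_{0}\oplus \hat{F}_{n})\to (S(F_{0}\oplus \hat{E}),S(F_{0}))\bigr)\geq k.$$

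Finally I would assemble the conclusion: by Proposition \ref{proposition borsukulam} this length is a lower bound for $G-cat(E,\phi^{a})$, so $G-cat(E,\phi^{a})\geq k$ for every $k$, whence $G-cat(E,\phi^{a})=\infty$. The main obstacle is the third step: making precise the dictionary between the intrinsic Universal Cohomology Length (defined through products of kernel elements over $\pi_{G}^{0}(\eub{G})$) and the geometric length of the specific sphere-pair map of Proposition \ref{proposition borsukulam}, and checking that the $\pi_{G}^{0}(\eub{G})$-module structure on $\pi_{G}^{0}(X_{n})$ transports the Bartsch powers faithfully. The absence of nilpotents in the rationalized Burnside ring, exploited in Proposition \ref{proposition nonilpotents}, is exactly what keeps the powers $w^{k}$ alive under $p$-completion, so it is here that the maximality condition \ref{condition m} on the finite subgroups does the essential work.
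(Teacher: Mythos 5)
Your proposal follows essentially the same route as the paper: exhaust $S(F_{0}\oplus\hat{E})$ by the finite-dimensional sphere pairs $S(F_{0}\oplus\hat{F}_{n})$, invoke Propositions \ref{proposition powers} and \ref{proposition nonilpotents} to obtain the generalized Bartsch element $w$ whose powers survive $p$-completion, push $w$ forward along the classifying maps to witness arbitrarily large length of the inclusions $S(F_{0}\oplus\hat{F}_{n})\to (S(F_{0}\oplus\hat{E}),S(F_{0}))$, and conclude via the lower bound of Proposition \ref{proposition borsukulam}. The ``dictionary'' step you flag as the main obstacle is handled in the paper only by the commutativity (up to homotopy) of the restriction diagram identifying $v_{n}$ as lying in the image of $j_{n}^{*}$, so your level of rigor there matches the original.
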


\begin{proof}
Let  $F_{n}$  be  an  increasing   sequence  of finite  dimensional  linear  $G$-subspaces of  $\hat{E}$  such  that  $\hat{F}=\cup F_{n}$  is  infinite  dimensional. 
as  in  \cite{mountainpassthmclapp},  the $\hat{\pi}^*_ {\mathcal{FIN}}$-length  of  the  inclusion

$$S(F_{0}\oplus F_{n})\to  S(F_{0}\oplus  \hat{E}, S(F_{0}))$$
becomes  arbitrarily  large  as  $n $ tends  to  infinity. 

The proper $G$-ANR  $S(\hat{E})$ satisfies  the  hypothesis  of  lemma  \ref{proposition  powers}.

Hence  there  is  an element  $w\in \pi_{G}^{0}(\eub{G})$  satisfying conditions  1  to  3  in  \ref{proposition powers}.  let  $v$ and  $v_{n}$  be  the   images  of  $W$  along  the  homomorphism induced  by  the   universal  maps $S(F_{0}\oplus \hat{E})\to  \eub{G}$, respectively $S(F_{0}\oplus \hat{F}_{n})\to  \eub{G})$. 
Since  the  diagram 

$$\xymatrix{\pi_{G}^{0}(S(F_{0}\oplus \hat{F}_{n})\ar[rr]^{j_{n}^{*}} & & \pi_{G}^{0}(S(F_{0}\oplus \hat{E}),S(F_{0})) \ar[ld]_{j^{*}} \\
& \pi_{G}^{0}(S(F_{0}\oplus \hat{E}) \ar[lu]  & }$$  
 commutes  up to homotopy, $v_{n}\in \rm{im}(j_{n}^{*})$, and  proposition  \ref{proposition powers} yields   that  for  any  $k$  there is  an  $n$   with  $\hat{\pi}_{\mathcal{FIN}}^{*}- {\rm lenght}  \;j_{n}\geq k$. 

\end{proof}

\section{Concluding  remarks }

Paraphrasing  Willem, \cite{Willemminimax}, page  3  Minimax-Type  Theorems  usually  consist  of  different   parts: 

\begin{itemize}
 \item  {Deformation lemma using  some  pseudo-gradient vector field. }
 \item  {Construction  of  Palais-Smale  typical  sequences, which  converge  either  due to some  \emph{a priori}  compactness  condition, or  which give  critical  points  using additional  \emph{a posteriori}  information, typically  \emph{topological intersection properties}, like  the  intermediate  value  theorem, the  Borsuk-Ulam theorem, degree notions, etc.}
\end{itemize}

In this  work,  the   proof  given  by  Bartsch-Clapp  Puppe  was  adapted  using  a   Borsuk-Ulam-Type Theorem,  which  may  be  deduced  from \ref{proposition borsukulam} and \ref{proposition nonilpotents}. The  problem  of  classifying  the  groups  satisfying equivariant Borsuk-Ulam-Type  theorems  has  deserved   particular  attention \cite{bartschborsukulam}, \cite{lahtinenmorimoto}, among  others.

Let  $G$  be  a  discrete, linear  group  which acts  properly and  linearly on  finite  dimensional  representation spheres $S^{V}$.  Define  the  Borsuk-Ulam  function  $b_{G}(n)$  as  the   maximal  natural  number $k$  such  that  if  there  exists  a $G$-map $S^{V}\to S^{W}$ where  $dim V\geq n$,  then  ${\rm dim} W \geq k$                                                                                                                                                                                                                                                                                                                                                                                                                                                                   
                                                                                                                                                                                                                                                                                                                                                                                                                                                             
\begin{problem}
Classify  all linear, discrete  groups  satisfying  
$$\lim _{n\to  \infty} b_{G}(n)=\infty$$ 
\end{problem}
 as  in \cite{bartschborsukulam}, \cite{lahtinenmorimoto},   and  in this   work, condition \ref{condition m},  the  answer  should  involve  restrictions  for  the    number of primes dividing  the cardinality  of  the  isotropy groups.

\begin{remark}[Topological Noncompact Groups  of Symmetry]
In   the  context  of  Hamiltonian  Systems,  some  proper  actions  of  non-compact Lie  groups  appear  \cite{robertswulfflamb}.  Equivariant  Cohomotopy  Theory   has  been  extended  in \cite{barcenastesis}, \cite{barcenasnonlinear} for  these   class  of  symmetries.  The use  of Equivariant  Algebraic  Topology, particularly  Equivariant  Cohomotopy  may  be  useful. However, in  this  context, the  Segal  Conjecture (which  was  the  main  homotopy theoretical  input  of  theorem \ref{theorem mountainpass}, crucially in  the   proof  of the  Borsuk-Ulam-type   result)  is  not  true, as  it  is   not  even  true   for   compact  Lie  groups, see \cite{feshbach}, \cite{bauer}.   
\end{remark}

\begin{remark}[Equivariant Degree Notions  for  Infinite  Discrete  Groups]
In  \cite{barcenastesis},  an equivariant  degree  notion  for   proper  actions  of discrete  group  is  defined. 
This  assigns  to  a  quadruple $(E,F,T,c)$  consisting  of   locally  trivial $G$- Hilbert  bundles over  a  proper, cocompact  $G$-CW  complex, a  fibrewise Fredholm operator $T$  and a  fibrewise compact  nonlinearity  satisfying  the  property that  the  map $T_{x}+c_{x}:E_{x}\to F_{x}$ defined  on   the  fibers $E_{x}, F_{x}$ over  each  point $x$  is  proper,  an  element  in  the equivariant cohomotopy $\pi_{G}^{*}(X)$, as introduced in  definition \ref{definition equivariantcohomotopy}. 
\end{remark}

\typeout{-------------------------------------- References  ---------------------------------------}

\bibliographystyle{abbrv}
\bibliography{mountainpass.bib}

\begin{thebibliography}{10}

\bibitem{haeberlyjackowskimay}
J.~F. Adams, J.-P. Haeberly, S.~Jackowski, and J.~P. May.
\newblock A generalization of the {S}egal conjecture.
\newblock {\em Topology}, 27(1):7--21, 1988.

\bibitem{ambrosettirabinowitz}
A.~Ambrosetti and P.~H. Rabinowitz.
\newblock Dual variational methods in critical point theory and applications.
\newblock {\em J. Functional Analysis}, 14:349--381, 1973.

\bibitem{sergeyleonardo}
N.~Antonyan, S.~A. Antonyan, and L.~Rodr{\'{\i}}guez-Medina.
\newblock Linearization of proper group actions.
\newblock {\em Topology Appl.}, 156(11):1946--1956, 2009.

\bibitem{antonyanelfving}
S.~A. Antonyan and E.~Elfving.
\newblock The equivariant homotopy type of {$G$}-{ANR}'s for proper actions of
  locally compact groups.
\newblock In {\em Algebraic topology---old and new}, volume~85 of {\em Banach
  Center Publ.}, pages 155--178. Polish Acad. Sci. Inst. Math., Warsaw, 2009.

\bibitem{atiyahmacdonald}
M.~F. Atiyah and I.~G. Macdonald.
\newblock {\em Introduction to commutative algebra}.
\newblock Addison-Wesley Publishing Co., Reading, Mass.-London-Don Mills, Ont.,
  1969.

\bibitem{ayalalasherasquintero}
R.~Ayala, F.~F. Lasheras, and A.~Quintero.
\newblock The equivariant category of proper {$G$}-spaces.
\newblock {\em Rocky Mountain J. Math.}, 31(4):1111--1132, 2001.

\bibitem{barcenastesis}
N.~B\'{a}rcenas.
\newblock Nonlinearity, proper actions and homotopy theory.
\newblock {\em Ph.D Thesis}, 2010.

\bibitem{barcenasnonlinear}
N.~B{\'a}rcenas~Torres.
\newblock Nonlinearity, proper actions and equivariant stable cohomotopy.
\newblock {\em Pure Appl. Math. Q.}, 9(3):387--416, 2013.

\bibitem{bartschborsukulam}
T.~Bartsch.
\newblock On the existence of {B}orsuk-{U}lam theorems.
\newblock {\em Topology}, 31(3):533--543, 1992.

\bibitem{bartschhabilitation}
T.~Bartsch.
\newblock {\em Topological methods for variational problems with symmetries},
  volume 1560 of {\em Lecture Notes in Mathematics}.
\newblock Springer-Verlag, Berlin, 1993.

\bibitem{mountainpassthmclapp}
T.~Bartsch, M.~Clapp, and D.~Puppe.
\newblock A mountain pass theorem for actions of compact {L}ie groups.
\newblock {\em J. Reine Angew. Math.}, 419:55--66, 1991.

\bibitem{bauer}
S.~Bauer.
\newblock On the {S}egal conjecture for compact {L}ie groups.
\newblock {\em J. Reine Angew. Math.}, 400:134--145, 1989.

\bibitem{bridsonhaefliger}
M.~R. Bridson and A.~Haefliger.
\newblock {\em Metric spaces of non-positive curvature}, volume 319 of {\em
  Grundlehren der Mathematischen Wissenschaften [Fundamental Principles of
  Mathematical Sciences]}.
\newblock Springer-Verlag, Berlin, 1999.

\bibitem{clapppuppelusternik}
M.~Clapp and D.~Puppe.
\newblock Invariants of the {L}usternik-{S}chnirelmann type and the topology of
  critical sets.
\newblock {\em Trans. Amer. Math. Soc.}, 298(2):603--620, 1986.

\bibitem{clapppuppe}
M.~Clapp and D.~Puppe.
\newblock Critical point theory with symmetries.
\newblock {\em J. Reine Angew. Math.}, 418:1--29, 1991.

\bibitem{lueckdavis}
J.~F. Davis and W.~L{\"u}ck.
\newblock Spaces over a category and assembly maps in isomorphism conjectures
  in {$K$}- and {$L$}-theory.
\newblock {\em $K$-Theory}, 15(3):201--252, 1998.

\bibitem{dress}
A.~Dress.
\newblock A characterisation of solvable groups.
\newblock {\em Math. Z.}, 110:213--217, 1969.

\bibitem{feshbach}
M.~Feshbach.
\newblock The {S}egal conjecture for compact {L}ie groups.
\newblock {\em Topology}, 26(1):1--20, 1987.

\bibitem{marzantowiczkrawkewicz}
W.~Krawcewicz and W.~Marzantowicz.
\newblock Lusternik-{S}chnirelman method for functionals invariant with respect
  to a finite group action.
\newblock {\em J. Differential Equations}, 85(1):105--124, 1990.

\bibitem{laitinen}
E.~Laitinen.
\newblock On the {B}urnside ring and stable cohomotopy of a finite group.
\newblock {\em Math. Scand.}, 44(1):37--72, 1979.

\bibitem{lahtinenmorimoto}
E.~Laitinen and M.~Morimoto.
\newblock Finite groups with smooth one fixed point actions on spheres.
\newblock {\em Forum Math.}, 10(4):479--520, 1998.

\bibitem{luecksegal}
W.~L\"uck.
\newblock The segal conjecture for infinite groups.
\newblock Unpublished.

\bibitem{luecktransformation}
W.~L{\"u}ck.
\newblock {\em Transformation groups and algebraic {$K$}-theory}, volume 1408
  of {\em Lecture Notes in Mathematics}.
\newblock Springer-Verlag, Berlin, 1989.
\newblock Mathematica Gottingensis.

\bibitem{lueckeqstable}
W.~L{\"u}ck.
\newblock The {B}urnside ring and equivariant stable cohomotopy for infinite
  groups.
\newblock {\em Pure Appl. Math. Q.}, 1(3, part 2):479--541, 2005.

\bibitem{lueckeqcohomological}
W.~L{\"u}ck.
\newblock Equivariant cohomological {C}hern characters.
\newblock {\em Internat. J. Algebra Comput.}, 15(5-6):1025--1052, 2005.

\bibitem{lueckclassifying}
W.~L{\"u}ck.
\newblock Survey on classifying spaces for families of subgroups.
\newblock In {\em Infinite groups: geometric, combinatorial and dynamical
  aspects}, volume 248 of {\em Progr. Math.}, pages 269--322. Birkh\"auser,
  Basel, 2005.

\bibitem{lueckoliverchern}
W.~L{\"u}ck and B.~Oliver.
\newblock Chern characters for the equivariant {$K$}-theory of proper
  {$G$}-{CW}-complexes.
\newblock In {\em Cohomological methods in homotopy theory ({B}ellaterra,
  1998)}, volume 196 of {\em Progr. Math.}, pages 217--247. Birkh\"auser,
  Basel, 2001.

\bibitem{lueckolivervectorbundles}
W.~L{\"u}ck and B.~Oliver.
\newblock The completion theorem in {$K$}-theory for proper actions of a
  discrete group.
\newblock {\em Topology}, 40(3):585--616, 2001.

\bibitem{lueckstamm}
W.~L{\"u}ck and R.~Stamm.
\newblock Computations of {$K$}- and {$L$}-theory of cocompact planar groups.
\newblock {\em $K$-Theory}, 21(3):249--292, 2000.

\bibitem{lyndonschupp}
R.~C. Lyndon and P.~E. Schupp.
\newblock {\em Combinatorial group theory}.
\newblock Classics in Mathematics. Springer-Verlag, Berlin, 2001.
\newblock Reprint of the 1977 edition.

\bibitem{Marzantowiczlusternik}
W.~Marzantowicz.
\newblock A {$G$}-{L}usternik-{S}chnirelman category of space with an action of
  a compact {L}ie group.
\newblock {\em Topology}, 28(4):403--412, 1989.

\bibitem{matumotocech}
T.~Matumoto.
\newblock Equivariant {CW} complexes and shape theory.
\newblock {\em Tsukuba J. Math.}, 13(1):157--164, 1989.

\bibitem{palais}
R.~S. Palais.
\newblock On the existence of slices for actions of non-compact {L}ie groups.
\newblock {\em Ann. of Math. (2)}, 73:295--323, 1961.

\bibitem{rabinowitz}
P.~H. Rabinowitz.
\newblock {\em Minimax methods in critical point theory with applications to
  differential equations}, volume~65 of {\em CBMS Regional Conference Series in
  Mathematics}.
\newblock Published for the Conference Board of the Mathematical Sciences,
  Washington, DC, 1986.

\bibitem{robertswulfflamb}
M.~Roberts, C.~Wulff, and J.~S.~W. Lamb.
\newblock Hamiltonian systems near relative equilibria.
\newblock {\em J. Differential Equations}, 179(2):562--604, 2002.

\bibitem{segalicm}
G.~B. Segal.
\newblock Equivariant stable homotopy theory.
\newblock In {\em Actes du {C}ongr\`es {I}nternational des {M}ath\'ematiciens
  ({N}ice, 1970), {T}ome 2}, pages 59--63. Gauthier-Villars, Paris, 1971.

\bibitem{tomdieck}
T.~tom Dieck.
\newblock {\em Transformation groups}, volume~8 of {\em de Gruyter Studies in
  Mathematics}.
\newblock Walter de Gruyter \& Co., Berlin, 1987.

\bibitem{Willemminimax}
M.~Willem.
\newblock {\em Minimax theorems}.
\newblock Progress in Nonlinear Differential Equations and their Applications,
  24. Birkh\"auser Boston Inc., Boston, MA, 1996.

\end{thebibliography}

\end{document}